\documentclass[a4paper,10pt]{article}
\usepackage[paper=a4paper, left=3.5cm, right=3.5cm, top=3cm, bottom=3cm]{geometry}

\usepackage[utf8]{inputenc}
\usepackage[english]{babel}

\usepackage{todonotes}
\usepackage{algorithm,algpseudocode}
\usepackage{amsmath,amssymb,amsthm}
\usepackage[normalem]{ulem} % needed for \sout
\usepackage{cancel} % needed for \cancel (i.e. sout in math mode)

\graphicspath{{./}{figs/}}

\newcommand{\R}{\mathbb R}

\newtheorem{dfntn}{Definition}[section]
\newtheorem{thrm}{Theorem}[section]
\newtheorem{rmk}{Remark}[section]

\newtheorem{lmm}{Lemma}[section]

\newcommand{\ic}{0 }

 \title{Time adaptivity in model predictive control}
 \author{Alessandro Alla$^1$, Carmen Gr\"a\ss{}le$^2$, Michael Hinze$^3$}
\date{\small $^1$ Department of Mathematics, PUC-Rio, Rio de Janeiro, Brasil, alla@mat.puc-rio.br\\
$^2$ Max Planck Institute for Dynamics of Complex Technical Systems, Magdeburg, Germany\\
$^3$ Mathematical Institute, University of Koblenz-Landau, Koblenz, Germany \footnote{Michael Hinze acknowledges support of the German ministry for Education and Research within PASIROM under the grant 05M18GUA.}}

\usepackage{hyperref}

%%%%%%%%%%%%%%%--BODY--%%%%%%%%%%%%%%%%%%
\begin{document}

\maketitle

\begin{abstract} 
The core of the Model Predictive Control (MPC) method in every step of the algorithm consists in solving a time-dependent optimization problem on the prediction horizon of the MPC algorithm, and then to apply a portion of the optimal control over the application horizon to obtain the new state. To solve this problem efficiently, we propose a time-adaptive residual a-posteriori error control concept based on the optimality system of this optimal control problem. This approach not only delivers a tailored time discretization of the the prediction horizon, but also suggests a tailored length of the application horizon for the current MPC step. We apply this concept for systems governed by linear parabolic PDEs and present several numerical examples which demonstrate the performance and the robustness of our adaptive MPC control concept.

\end{abstract}

% \begin{keywords}
% Optimization with PDE constraints, model predictive control, time adaptivity
% \end{keywords}
% 
% \begin{AMS}
% 49M41 65M50 93B45
% \end{AMS}

\section{Introduction}\label{sec:intro}

In this article we consider Model Predictive Control (MPC) for systems governed by linear parabolic PDEs. This approach is also known as {\em moving horizon control} or {\em receding horizon control}, where we refer to the (seminal) monographs \cite {GruP17,RawMD09} for a comprehensive presentation of this method. The core of the method for every MPC step at time $t$ consists in solving a parabolic PDE constrained optimization problem on the prediction horizon $[t,t+\bar{T}]$, where $\bar{T}>0$. To solve this problem efficiently we propose a time-adaptive residual based a-posteriori error control concept for the elliptic space-time reformulation of the optimality system of the PDE constrained optimization problem. The contribution of our paper and the novelty of our approach is two-fold; \begin{itemize}
    \item it delivers a tailored time discretization of the prediction horizon $[t,t+\bar{T}]$ using residual based a-posteriori error control concepts, and
    \item also suggests a tailored length $\tau \le \bar{T}$ of the application horizon $[t,t+\tau]\subseteq [t,t+\bar T]$ for the current MPC step. 
\end{itemize} 
Our time-adaptive MPC algorithm works as follows, where the details of its formulation are given in Section \ref{sec1}.
 \begin{algorithm}[H]
\caption{Model predictive control (MPC) with adaptive time grids }
\label{Alg:NMPC_on}
\begin{algorithmic}[1]
\Require Number of time instances in each subinterval $N$, prediction horizon $\bar{T}$, initial condition $y_\circ$, desired state $y_d$, source $f$, space domain $\Omega$, constants $\nu,\mu,\alpha$. 
\State Set $t_0=0, y_0=y_\circ$.
\For{$i =0,1,2,\ldots$}
\State  Compute an adaptive grid $\{t_i^{j}\}_{j=1}^{N}$ in $[t_i,\bar t_i:=t_i+\bar{T}]$ using \eqref{est-thm31mu}, where $t_i^1:= t_i$ and $t_i^{N}:= \bar t_i$.
\State Compute the optimal control over $[t_i,\bar t_i]$ solving
\begin{equation}\label{MPC:FinHor_on}
u^N:= \arg\min_{ u\in L^2((t_i,\bar t_i);\Omega)} \hat{J}^N(u,t_i,y_i).
\end{equation}
\State Define the length of the application horizon $\tau_i:= t_i^2-t_i^1$.
\State Define the MPC feedback value $\phi^N(y_{[u^N,t_i,y_i]}(t))=u^N(t)$ for  $t\in(t_i,t_i+\tau_i]$.
\State Compute the associated state $y^N=y_{[\phi^N,t_i,y_i]}(t)$ by solving \eqref{heatdepletion}  on $(t_i,t_i+\tau_i]$.
\State Set $t_{i+1}:=t_i+\tau_i$, $y_{i+1}:=y^N(t_{i+1})$, %\ale{$\tau_i = t_1^i$}, 
$i \leftarrow i+1$.
\EndFor
\end{algorithmic}
\end{algorithm}

Our adaptive concept is implemented with the first statement of the {\bf For-loop} in Algorithm \ref{Alg:NMPC_on} and works as follows: in a first step, we rewrite the optimality conditions of the MPC optimization problem \eqref{MPC:FinHor_on} as a second order in time and fourth order in space elliptic equation for the state variable, to which we then apply classical concepts from residual based a-posteriori error control for the time variable. This allows to construct a time grid for the state which is related to the \textit{optimal} state solution and, at the same time, delivers the length $\tau$ of the application horizon in the current MPC step. The idea is based on \cite{GonHZ12}, and now is transferred for a mixed formulation, where the a-posteriori error estimate is obtained from a semi-time discrete mixed form. For the fast computation of the adaptive time grid we use a coarse spatial discretization, where we assume that the structure of the temporal grid is not sensitive against changes in the spatial resolution. This is verified heuristically by numerical examples in e.g.\ \cite{AllGH16,AllGH18}. In a second step the resulting time grid is used for the numerical solution of the MPC optimization problem \eqref{MPC:FinHor_on}, where we use the control and the state obtained from the computation of the adaptive grid as initialization for the  solution procedure. Finally, the state is updated on the application horizon $[t_i,t_i+\tau_i]$ through the solution of the parabolic equation \eqref{heatdepletion} with the optimal control $u^N$.

Let us briefly comment on related literature. Since there is a vast amount of books and papers on MPC we here concentrate on contributions related to adaptivity in MPC. In \cite{GruSS19, Kre18} the authors took advantage of the structure of the problem using Lyapunov functions and/or the turnpike property to construct adaptive grids for the MPC optimal control problem. The turnpike property (see e.g.\ \cite{Zas06}) is often a key tool to prove asymptotic stability of the MPC method and to find the minimal prediction horizon (see e.g.\ \cite{BreP20,GruPSetal09,GruSS20,KunP19}). Our ideas are related to \cite{GSSnew}, where a goal-oriented  adaptive approach for the MPC optimal control problem is proposed. This paper appeared while we were editing the first version of our manuscript. However, the a-posteriori concepts proposed there differ from our approach which relies on residual based a-posteriori error analysis for the elliptic space-time reformulation of the optimality systems appearing in every step of the MPC algorithm. Our method is tailored directly by the optimal state.

The outline of this paper is as follows. In Section~\ref{sec1}, we present the optimal control problem within the MPC framework and recall the basic idea of the MPC method. Further, we state the optimality conditions for the MPC subproblem. In Section~\ref{sec2} we describe the reformulation of the optimality system to a second order in time and fourth order in space elliptic equation as well as a mixed variational form. Further, we derive an a-posteriori error estimate for a semi-time discrete form. In Section~\ref{secadaptivity}, we propose the novel time-adaptive scheme in MPC. Finally, numerical tests are discussed in Section~\ref{secnumerics} and conclusions are made in Section~\ref{secconcl}.\\

\section{Optimal control setting within the MPC framework}\label{sec1}
\subsection{Preliminaries}
Let $\Omega \subset \mathbb{R}^n, n \in \{1,2,3\}$ be an open and bounded domain with Lipschitz boundary $\partial \Omega.$ The Lebesgue space of square integrable functions is denoted by $L^2(\Omega)$  with inner product $(u,v)_{L^2(\Omega)}:= \int_\Omega uv dx$ and norm $\|u\|_{L^2(\Omega)}:=(\int_\Omega |u(x)|^2 dx)^{1/2}$ for $u,v \in L^2(\Omega)$. Further, let $H^k(\Omega)$ defined by 
$$H^k(\Omega) := \{u\in L^2(\Omega): u \text{ has }\text{weak }\text{derivatives } D^{\beta}u \in L^2(\Omega) \text{ for all } |\beta | \leq k\}$$ 
with $k \in \mathbb{N}_0$ and equipped with the norm $\|u\|_{H^{k}(\Omega)}  := (\sum_{|\beta| \leq k} \|D^\beta u \|_{L^2(\Omega)}^2)^{1/2}$  and
$$ H_0^k(\Omega):= \{ u \in H^k(\Omega): D^\beta u = 0 \text{ on } \partial \Omega \text{ in the sense of traces } (|\beta| \leq k-1)\}. $$
\noindent We use the notation $H^{-1}(\Omega)$ for the dual space of $H_0^1(\Omega)$ and denote $\langle \cdot , \cdot \rangle_{H^{-1}(\Omega),H_0^1(\Omega)}$ as the duality pairing of $H^{-1}(\Omega)$ with $H_0^1(\Omega)$. By $| \cdot |_{H^1(\Omega)}$ we denote the $H^1$-seminorm given by $|u|_{H^1(\Omega)} = \|\nabla u \|_{L^2(\Omega)}$ for $u\in H_0^1(\Omega)$. We recall that the Poincar\'{e} constant is given by the smallest number $c_p>0$ such that the Poincar\'{e} inequality 
$$\|u\|_{L^2(\Omega)} \leq c_p \|\nabla u \|_{L^2(\Omega)}, \; \forall u \in H_0^1(\Omega)$$ 
is fulfilled. Thus, $|.|_{H^1(\Omega)}$ is a norm on $H_0^1(\Omega)$ equivalent to the norm $\|.\|_{H^1(\Omega)}$. For a given Banach space $X$ and a given time $T>0$, we denote by $L^2((\ic,T);X)$ the space of  measurable square integrable abstract functions with norm $\|u\|_{L^2((\ic,{T});X)} := (\int_{\ic}^{T} \|u(t)\|_X^2 dt )^{1/2}$. We define 
$$W((\ic,T);H_0^1(\Omega)):=\{v\in L^2((\ic,T);H_0^1(\Omega)), v_t \in L^2((\ic,T);H^{-1}(\Omega))\}.$$ 
Note that for a given function $g$ in space-time, we use the short hand notation $g(t)$ to indicate the time dependency and drop the space argument.

\subsection{Model predictive control}\label{secmpc}

In this section we specify our MPC setting of Algorithm \ref{Alg:NMPC_on}. At time $t_0$ we initialize our MPC algorithm and for convenience use a fixed length $\bar T > 0$ for the prediction horizon. At time instance $t_i \ge t_0$ $(i\in \mathbb{N})$ this horizon is denoted by $[t_i,\bar t_i]$ with $\bar t_i := t_i+\bar T$. We denote with $\tau_i$ the length of the application horizon at time instance $t_i$, so that $[t_i,t_i+\tau_i] \subseteq [t_i,\bar t_i]$. The adaptive time grid at time instance $t_i$ is denoted by $\{t_i^j\}_{j=1}^{N}$, where we set $t_i^1:=t_i$ and $t_i^N:=\bar t_i$. Form here onwards we use $t_i^N$ instead of $\bar t_i$ to denote the final time in the prediction horizon. We note that the value of $\tau_i = t_i^2-t_i^1$ may change with every time instance $t_i$ due to our time-adaptive concept.

The reduced cost functional over the domain $[t_i, t_i^N] \times \Omega$ which is considered at the $i$-th time instance of the MPC algorithm for $i=0,1,2\dots,$ is given by
\begin{equation}\label{mpccost}
\hat{J}^N(u,t_i,y_i):=\int_{t_i}^{t_i^N} \ell (y_{[u,t_i,y_i]}(t),u(t))\,dt,
\end{equation}
where the function $\ell$ in our applications is given by
\begin{equation}\label{costJ}
\ell(y(t),u(t)):= \frac{1}{2} \|y(t) - y_d(t)\|_{L^2(\Omega)}^2 + \frac{\alpha}{2}\|u(t)\|_{L^2(\Omega)}^2.
\end{equation}
Here $y_d \in L^2((t_i,t_i^N);\Omega)$ denotes the desired state and $\alpha > 0$ the prescribed regularization parameter. To anticipate discussions we note that also other cost functionals could be considered.
The governing dynamics for the state $y\equiv y_{[u,t_i,y_i]}$ is given by the linear parabolic partial differential equation
\begin{equation}\label{heat}
\left\{
\begin{array}{rcll}
y_t-\nu\Delta y & = & f+u &\text{ in } (t_i,t_i^N]\times\Omega,\\
y &= & 0 &\text{ on } (t_i,t_i^N]\times \partial \Omega,\\
y(t_i) & = & y_i & \text{ in } \Omega,
\end{array}
\right.
\end{equation}
where $\nu > 0$ is a given constant, $f$ is a given source term and $y_i$ is the given initial state which is obtained from the preceding MPC step. The function $u$ will act as the control.
The weak form of \eqref{heat} reads: for given $f \in L^2((t_i,t_i^N);\Omega)$, $y_i \in L^2(\Omega)$ and $u\in L^2((t_i,t_i^N);\Omega)$, find a state $y \in W((t_i,t_i^N);H_0^1(\Omega))$ satisfying $y(t_i) = y_i$ such that
\begin{equation}\label{heat_weak}
 \langle y_t(t),v \rangle_{H^{-1}(\Omega),H_0^1(\Omega)} + \nu   \int_\Omega \nabla y(t) \cdot \nabla v dx  = \int_\Omega ( f(t)+u(t))v dx
\end{equation}
holds for all $v \in H_0^1(\Omega)$ and almost everywhere in $(t_i,t_i^N]$. It is clear that \eqref{heat_weak} admits a unique weak solution, see e.g.\ \cite[\textsection 7.1.2, Theorems 3 and 4]{Eva10}. It therefore is meaningful to consider the state $y$ as a function of the control $u$, so that the cost functional in \eqref{mpccost} in fact only depends on the control as independent variable.\\

Then, the open loop control problem in the $i-$th  optimization instance of the MPC method is given by
\begin{equation}\label{mpcocpreduced}
 \min_{u \in L^2((t_i,t_i^N);\Omega)} \hat{J}^N(u,t_i,y_i).
\end{equation}
It forms the core of every MPC step. In the next section we develop a time-adaptive concept for its numerical approximation.

\subsection{Optimal control problem}\label{sec:ocp}

In this section, we investigate the distributed optimal control problem which we consider in each level of the MPC framework. 
To ease the notation here we will consider a general finite horizon $[0,T]$ instead of $[t_i,t_i^N]$.
It is clear that in the setting of the previous section the optimal control problem \eqref{mpcocpreduced} admits a unique solution $u\in L^2((0,T);L^2(\Omega))$. Moreover, there exists a unique adjoint state $p\in W((0,T); H^1_0(\Omega))$ which together with $u$ and the state $y\in W((0,T); H^1_0(\Omega))$ satisfies the optimality system consisting of the state equation
\begin{equation}\label{heat1}
\left\{
\begin{array}{rcll}
y_t-\nu\Delta y & = & f+u &\text{ in } (\ic,T]\times\Omega,\\
y &= & 0 &\text{ on } [\ic,T] \times \partial \Omega,\\
y(\ic) & = & y_0 & \text{ in } \Omega,
\end{array}
\right.
\end{equation}
the adjoint equation
\begin{equation}\label{adj}
\left\{
\begin{array}{rcll}
-p_t-\nu\Delta p  & = & y-y_d &\text{ in } [\ic,T)\times\Omega,\\
p & = & 0 &\text{ on } [\ic,T] \times \partial \Omega,\\
p(T) & = & 0 &\text{ in } \Omega,
\end{array}
\right.
\end{equation}
and the optimality condition
\begin{equation}\label{opt_con}
\alpha u + p=0\quad \mbox{ in } [\ic,T]\times\Omega.
\end{equation}
\begin{rmk}
We note that it is possible to consider control constraints, state constraints and control operators mapping abstract controls to feasible right hand sides in \eqref{mpcocpreduced}, see Section~\ref{constraints} for a discussion.
\end{rmk}
In the next section we rewrite the optimality system as an elliptic boundary value problem in space-time and exploit its elliptic structure to provide adaptive concepts for its solution. For this purpose we need the following higher regularity results for the weak solutions of $y$ of \eqref{heat1} and $p$ of \eqref{adj}, respectively.
 \begin{lmm}[Higher regularity \cite{Eva10}]\label{lemma:regularity}
 (i) Let $y_0 \in H_0^1(\Omega)$ and let $f$, $u$, $y_d \in L^2((\ic,T);\Omega)$. Then, according to \cite[\textsection 7.1.3. Theorem 5]{Eva10} the weak solution $y$ of \eqref{heat} and the weak solution $p$ of \eqref{adj} fulfill $y,p \in L^2((\ic,T);H^2(\Omega)) \cap L^\infty((\ic,T);H_0^1(\Omega))\cap  H^1((\ic,T);L^2(\Omega))$.\\
 (ii) Let $y_0 \in H_0^1(\Omega) \cap H^3(\Omega)$ and $f,u, y_d \in L^2((\ic,T);H^2(\Omega))\cap H^1((\ic,T);L^2(\Omega))$. Further, let the compatibility assumption $(u+f)(0)+\nu \Delta y_0  \in H_0^1(\Omega)$ hold true. Then according to \cite[\textsection 7.1.3. Theorem 6]{Eva10} the weak solution $y$ of \eqref{heat} and the weak solution $p$ of \eqref{adj} fulfill $y,p \in L^2((\ic,T);H^4(\Omega))\cap H^1((\ic,T);H^2(\Omega))\cap H^2((\ic,T);L^2(\Omega))$.
 \end{lmm}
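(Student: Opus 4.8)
The plan is to reduce both assertions to the parabolic regularity theorems of Evans cited in the statement, treating the forward state equation \eqref{heat} directly and the backward adjoint equation \eqref{adj} after a reversal of time. A key structural observation is that the right-hand side of \eqref{adj} is $y - y_d$, so the regularity of $p$ is conditional on that of $y$; I would therefore establish the claimed regularity for $y$ first and feed it into the analysis of $p$.

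For part (i), the state equation \eqref{heat} has right-hand side $f + u \in L^2(0,T;L^2(\Omega))$ and initial datum $y_0 \in H_0^1(\Omega)$, so \cite[\textsection 7.1.3. Theorem 5]{Eva10} applies verbatim and yields $y \in L^2(0,T;H^2(\Omega)) \cap L^\infty(0,T;H_0^1(\Omega)) \cap H^1(0,T;L^2(\Omega))$. For the adjoint equation I would introduce $\tilde p(t) := p(T-t)$, which solves the forward problem $\tilde p_t - \nu \Delta \tilde p = (y - y_d)(T - \cdot)$ with homogeneous Dirichlet boundary data and initial value $\tilde p(0) = p(T) = 0 \in H_0^1(\Omega)$. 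Since $y \in L^2(0,T;H^2(\Omega)) \hookrightarrow L^2(0,T;L^2(\Omega))$ by the step just completed and $y_d \in L^2(0,T;L^2(\Omega))$ by hypothesis, the right-hand side lies in $L^2(0,T;L^2(\Omega))$, and a second application of \cite[\textsection 7.1.3. Theorem 5]{Eva10} gives the asserted regularity for $\tilde p$; since the time reversal $t \mapsto T - t$ is an isometry of each of the three spaces involved, the same regularity transfers back to $p$.

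Part (ii) follows the identical scheme but invokes \cite[\textsection 7.1.3. Theorem 6]{Eva10}. For $y$, the stronger hypotheses $y_0 \in H_0^1(\Omega) \cap H^3(\Omega)$ and $f, u \in L^2(0,T;H^2(\Omega)) \cap H^1(0,T;L^2(\Omega))$ together with the stated compatibility condition $(u+f)(0) + \nu \Delta y_0 \in H_0^1(\Omega)$ are exactly those required, and one obtains $y \in L^2(0,T;H^4(\Omega)) \cap H^1(0,T;H^2(\Omega)) \cap H^2(0,T;L^2(\Omega))$. Reversing time in \eqref{adj} again, the right-hand side $(y-y_d)(T-\cdot)$ now lies in $L^2(0,T;H^2(\Omega)) \cap H^1(0,T;L^2(\Omega))$, because $y$ enjoys this regularity by the previous step (it is even smoother) and $y_d$ does by assumption, while the initial value $\tilde p(0) = 0$ trivially belongs to $H_0^1(\Omega) \cap H^3(\Omega)$.

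The step I expect to require the most care is verifying the compatibility condition for the time-reversed adjoint problem, which is the hypothesis of \cite[\textsection 7.1.3. Theorem 6]{Eva10} that has no analogue in part (i). For the forward problem solved by $\tilde p$ this condition reads $(y - y_d)(T) + \nu \Delta \tilde p(0) \in H_0^1(\Omega)$, and since $\tilde p(0) = 0$ it reduces to $(y-y_d)(T) \in H_0^1(\Omega)$. The term $y(T)$ is unproblematic: the regularity $y \in L^2(0,T;H^4(\Omega)) \cap H^1(0,T;H^2(\Omega))$ established above embeds into $C([0,T];H^3(\Omega))$, and the homogeneous boundary condition forces $y(T) \in H^3(\Omega) \cap H_0^1(\Omega)$. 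The remaining term is controlled by the embedding $L^2(0,T;H^2(\Omega)) \cap H^1(0,T;L^2(\Omega)) \hookrightarrow C([0,T];H^1(\Omega))$, so $y_d(T) \in H^1(\Omega)$; to close the argument one needs $y_d(T)$ to have vanishing trace, which I would add as, or read off from, the standing compatibility requirement on the target $y_d$. Granting this, \cite[\textsection 7.1.3. Theorem 6]{Eva10} applies to $\tilde p$ and, after undoing the time reversal, delivers the claimed regularity for $p$.
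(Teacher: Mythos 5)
Your proof is correct and is essentially the argument the paper intends: the lemma is stated as a direct consequence of the two cited theorems of Evans, applied first to the state equation and then, after the time reversal $t \mapsto T-t$, to the adjoint equation with right-hand side $y - y_d$, whose regularity is inherited from the step just completed. The one substantive point you raise is real: the first-order compatibility condition for the reversed adjoint problem in part (ii) reduces (since $p(T)=0$) to $(y-y_d)(T) \in H_0^1(\Omega)$, and while $y(T) \in H_0^1(\Omega)$ follows from the regularity of $y$ and its homogeneous boundary condition, the hypotheses of the lemma only give $y_d(T) \in H^1(\Omega)$ via $L^2(\ic,T;H^2(\Omega)) \cap H^1(\ic,T;L^2(\Omega)) \hookrightarrow C([\ic,T];H^1(\Omega))$, not a vanishing trace; so the claimed regularity of $p$ in (ii) requires the additional (unstated) assumption $y_d(T) \in H_0^1(\Omega)$, which you correctly identify rather than gloss over.
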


\section{Reformulation of the optimality system and time adaptivity}\label{sec2}
\subsection{Reformulation of the optimality system}\label{sec2:subreform}
Following along the lines of \cite{GonHZ12}, we can reformulate the optimality system \eqref{heat1}-\eqref{adj}-\eqref{opt_con} as an elliptic equation of fourth order in space and second order in time involving only the state variable $y$. The adjoint state $p$ as well as the control $u$ are not present in this equation. In particular, it is a two-point boundary value problem in space-time given by
\begin{equation}\label{2ord}
\left\{
\begin{array}{rcll}
-y_{tt}+\nu^2\Delta^2 y  +\frac{1}{\alpha} y & = &\frac{1}{\alpha}y_d -f_t-\nu \Delta f & \text{ in } (\ic,T)\times\Omega,\\
y & = & 0 &\text{ on } [\ic,T]\times\partial\Omega,\\
\nu \Delta y & = & -f &\text{ on } [\ic,T]\times\partial\Omega,\\
\left(y_t-\nu \Delta y \right)(T) & = & f(T) &\text{ in }\Omega,\\
y(\ic) & = & y_0 &\text{ in } \Omega.
\end{array}
\right.
\end{equation}
We note that for $\nu=1$ and $f\equiv0$ this setting coincides with the setting considered in \cite{GonHZ12}. Under higher regularity assumptions on the data, the following theorem shows that the optimal state $y$ of \eqref{heat1}-\eqref{adj}-\eqref{opt_con} fulfills the elliptic equation \eqref{2ord} a.e.\ in space-time.

\begin{thrm}
 Let $(y,u) \in W((\ic,T);H_0^1(\Omega))\times L^2((\ic,T);\Omega)$ with associated adjoint $p\in W((\ic,T);H_0^1(\Omega))$ denote the unique weak solution to \eqref{heat1}-\eqref{adj}-\eqref{opt_con}. Further, let the assumptions of Lemma~\ref{lemma:regularity}(ii) be fulfilled. Then, $y$ satisfies \eqref{2ord} a.e.\ in space-time.
\end{thrm}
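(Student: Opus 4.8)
The plan is to eliminate the control $u$ and the adjoint state $p$ from the optimality system \eqref{heat}--\eqref{adj}--\eqref{opt_con}, obtaining a single equation in $y$, and then to read off the side conditions of \eqref{2ord} from the boundary, initial and terminal data carried by the state and adjoint equations. Throughout, the higher regularity supplied by Lemma~\ref{lemma:regularity}(ii), namely $y,p \in L^2(\ic,T;H^4(\Omega))\cap H^1(\ic,T;H^2(\Omega))\cap H^2(\ic,T;L^2(\Omega))$, is exactly what makes each differentiation below meaningful in $L^2(\ic,T;L^2(\Omega))$ and gives sense to the trace and pointwise-in-time evaluations; without it the manipulations would only be formal.

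First I would use the optimality condition \eqref{opt_con} to substitute $u=-p/\alpha$ into the state equation \eqref{heat}, which produces the pointwise identity
\begin{equation}\label{eq:pexpr}
p = \alpha\left(f + \nu\Delta y - y_t\right) \qquad \text{a.e.\ in } (\ic,T)\times\Omega .
\end{equation}
By the assumed regularity the right-hand side belongs to $L^2(\ic,T;H^2(\Omega))$, so \eqref{eq:pexpr} is an identity between $H^2(\Omega)$-valued functions and may be differentiated in time and acted on by $\Delta$.

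Next I would insert \eqref{eq:pexpr} into the adjoint equation $-p_t-\nu\Delta p = y-y_d$. Differentiating \eqref{eq:pexpr} gives $p_t = \alpha(f_t+\nu\Delta y_t - y_{tt})$, and applying $-\nu\Delta$ gives $-\nu\Delta p = -\alpha\nu(\Delta f + \nu\Delta^2 y - \Delta y_t)$, both well defined in $L^2(\ic,T;L^2(\Omega))$. When these are added the two mixed terms $\pm\,\alpha\nu\Delta y_t$ cancel, and after rearranging and multiplying by $-1/\alpha$ one arrives exactly at the interior equation of \eqref{2ord},
\begin{equation*}
-y_{tt} + \nu^2\Delta^2 y + \frac{1}{\alpha}y = \frac{1}{\alpha}y_d - f_t - \nu\Delta f \qquad \text{a.e.\ in } (\ic,T)\times\Omega .
\end{equation*}
This single cancellation is the one computational observation on which the whole reformulation rests.

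Finally I would recover the four side conditions. The homogeneous Dirichlet condition $y=0$ on $[\ic,T]\times\partial\Omega$ and the initial condition $y(\ic)=y_0$ are inherited directly from \eqref{heat}. For the boundary condition $\nu\Delta y=-f$ I would evaluate \eqref{eq:pexpr} on the lateral boundary, where $p=0$; since $y$ has vanishing trace for every $t$ and $y\in H^1(\ic,T;H^2(\Omega))$, the bounded linear trace operator commutes with $\partial_t$, so $y_t$ also has vanishing trace and \eqref{eq:pexpr} collapses to $0=\alpha(f+\nu\Delta y)$ on $\partial\Omega$. For the terminal condition I would evaluate \eqref{eq:pexpr} at $t=T$, where $p(T)=0$; the embeddings $H^1(\ic,T;H^2(\Omega))\hookrightarrow C([\ic,T];H^2(\Omega))$ and $H^2(\ic,T;L^2(\Omega))\hookrightarrow C^1([\ic,T];L^2(\Omega))$ make $\Delta y(T)$ and $y_t(T)$ well defined in $L^2(\Omega)$, yielding $(y_t-\nu\Delta y)(T)=f(T)$. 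I expect the functional-analytic bookkeeping of this last step to be the only genuinely delicate part: the interior algebra is routine once the derivatives are known to exist, whereas justifying the interchange of trace and time derivative and the pointwise-in-time evaluations is precisely where the Sobolev--Bochner embeddings granted by Lemma~\ref{lemma:regularity}(ii) are indispensable.
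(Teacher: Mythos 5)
Your proposal is correct and follows exactly the route the paper itself indicates: the paper's proof is a one-line reference to \cite[Theorem~2.7]{GonHZ12} described as ``differentiation and insertion of the equations \eqref{heat}--\eqref{adj}--\eqref{opt_con},'' which is precisely your elimination of $u$ and $p$ via $p=\alpha(f+\nu\Delta y - y_t)$, the cancellation of the mixed terms $\pm\alpha\nu\Delta y_t$, and the recovery of the side conditions from $p=0$ on the lateral boundary and $p(T)=0$. Your explicit attention to the Sobolev--Bochner embeddings justifying the trace and pointwise-in-time evaluations is a welcome addition to what the paper leaves implicit.
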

\begin{proof}
The proof follows along the lines of the proof of \cite[Theorem 2.7]{GonHZ12} and uses differentiation and insertion of the equations \eqref{heat1}-\eqref{adj}-\eqref{opt_con}. 
\end{proof}

\noindent Let us homogenize \eqref{2ord}. For this, let $g$ be a function which fulfills the boundary conditions as well as initial and end time conditions of \eqref{2ord} and is sufficiently smooth. Let $y$ satisfy \eqref{2ord}. We define $\tilde{y}:=y-g$ and arrive at
\begin{equation}\label{2ord_hom}
\left\{
\begin{array}{rcll}
-\tilde{y}_{tt}+\nu^2\Delta^2 \tilde{y} + \frac{1}{\alpha} \tilde{y} & = &\tilde{y}_d & \text{ in } (\ic,T)\times\Omega,\\
\tilde{y} & = & 0 &\text{ on } [\ic,T]\times\partial\Omega,\\
\nu \Delta \tilde{y} & = & 0 &\text{ on } [\ic,T]\times\partial\Omega,\\
\left(\tilde{y}_t-\nu \Delta \tilde{y} \right)(T) & = & 0 &\text{ in }\Omega,\\
\tilde{y}(\ic) & = & 0 &\text{ in } \Omega,
\end{array}
\right.
\end{equation}
where 
\begin{equation}\label{yd}
\tilde{y}_d:= \frac{1}{\alpha} y_d - f_t - \nu \Delta f  + g_{tt} - \nu^2 \Delta^2 g - \frac{1}{\alpha} g.
\end{equation}

\noindent Now, let us derive a weak formulation of \eqref{2ord_hom}. For this purpose we introduce the function space
$$H^{2,1}_0((\ic,T);\Omega):=\left\{v\in H^{2,1}((\ic,T);\Omega): v(\ic)=0 \mbox { in }\Omega\right\},$$
where 
$$H^{2,1}((\ic,T);\Omega):=L^2((\ic,T);H^2(\Omega)\cap H^1_0(\Omega))\cap H^1((\ic,T); L^2(\Omega)).$$ It is equipped with the norm $$\|v\|_{H^{2,1}((\ic,T);\Omega)}:= \left( \|v\|_{L^2((\ic,T);H^2(\Omega))}^2 + \|v\|_{H^1((\ic,T);L^2(\Omega))}^2 \right)^{1/2}.$$ We introduce the following symmetric bilinear form 
$$A:H^{2,1}_0((\ic,T);\Omega)\times H^{2,1}_0((\ic,T);\Omega)\rightarrow\R,$$
$$A(v_1,v_2):=\displaystyle\int_{\ic}^T \int_\Omega \left( (v_1)_t (v_2)_t + \nu^2 \Delta v_1 \Delta v_2  + \frac{1}{\alpha}  v_1 v_2 \right)dxdt + \displaystyle\int_\Omega  \nu\nabla v_1(T) \nabla v_2(T) dx,$$
and linear form
$$L:H^{2,1}_0((\ic,T);\Omega)\rightarrow\R, \quad L(v) := \int_0^T \int_\Omega \tilde{y}_d v \; dxdt$$
where $\tilde{y}_d$ is defined in \eqref{yd}.

\begin{dfntn}(Weak formulation)
 The weak formulation of equation \eqref{2ord_hom} is given by: find $\tilde{y} \in H_0^{2,1}((\ic,T);\Omega)$ which satisfies
\begin{equation}\label{weak_2ord}
A(\tilde{y},v)=L(v)\quad \forall v\in H^{2,1}_0((\ic,T);\Omega).
\end{equation}
\end{dfntn}

Existence of a solution to \eqref{weak_2ord} and its relation to a solution to \eqref{2ord} is shown in the following theorem.

\begin{thrm}\label{thm:existence}
 Let $y$ denote a solution to \eqref{2ord} and let $g$ be a function which fulfills the boundary, initial and end time conditions in \eqref{2ord} and is sufficiently smooth. Then, $\tilde{y}=y-g$ is a solution to \eqref{weak_2ord}. On the other hand, if $\tilde{y}$ is a solution to \eqref{weak_2ord} and the assumptions of Lemma~\ref{lemma:regularity}(ii) are fulfilled, then $y=\tilde{y}+g$ satisfies \eqref{2ord} a.e.\ in space-time.
\end{thrm}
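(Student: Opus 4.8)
The plan is to prove both implications by a single mechanism --- testing the strong form against an arbitrary $v\in H^{2,1}_0(0,T;\Omega)$ and integrating by parts in space and time --- and to read the theorem as the statement that the two \emph{natural} boundary conditions of \eqref{2ord_hom} (the curvature condition on $\partial\Omega$ and the end-time condition at $t=T$) are precisely those encoded weakly by the bilinear form $A$. Since the homogenization step already shows that $\tilde y=y-g$ solves \eqref{2ord_hom} whenever $y$ solves \eqref{2ord}, it suffices to argue on the homogeneous problem throughout and to transfer the conclusion back via $y=\tilde y+g$ only at the very end, where the construction of $g$ and the definition \eqref{yd} of $\tilde y_d$ do the bookkeeping.

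For the forward direction (strong $\Rightarrow$ weak), I would multiply the equation $-\tilde y_{tt}+\nu^2\Delta^2\tilde y+\frac1\alpha\tilde y=\tilde y_d$ by $v\in H^{2,1}_0(0,T;\Omega)$ and integrate over $(0,T)\times\Omega$. Integrating $-\tilde y_{tt}$ by parts once in time produces $\int_0^T\int_\Omega\tilde y_t v_t\,dxdt$ together with an endpoint contribution; the term at $t=0$ drops because $v(0)=0$ by membership in $H^{2,1}_0$, leaving only $-\int_\Omega\tilde y_t(T)v(T)\,dx$. Integrating the biharmonic term $\nu^2\int_\Omega(\Delta^2\tilde y)v\,dx$ by parts twice in space yields $\nu^2\int_\Omega\Delta\tilde y\,\Delta v\,dx$, the two resulting surface integrals vanishing---one because $v=0$ on $\partial\Omega$, the other because $\nu\Delta\tilde y=0$ on $\partial\Omega$. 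It then remains to match the leftover end-time term with the last term of $A$: using the end-time condition $\tilde y_t(T)=\nu\Delta\tilde y(T)$ and one spatial Green identity (whose surface integral vanishes since $v(T)\in H^1_0(\Omega)$) gives $-\int_\Omega\tilde y_t(T)v(T)\,dx=\nu\int_\Omega\nabla\tilde y(T)\cdot\nabla v(T)\,dx$, which is exactly the boundary term of $A(\tilde y,v)$. Collecting the four pieces proves $A(\tilde y,v)=L(v)$.

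For the converse (weak $\Rightarrow$ strong), I would invoke Lemma~\ref{lemma:regularity}(ii), which guarantees $\tilde y\in L^2(0,T;H^4(\Omega))\cap H^1(0,T;H^2(\Omega))\cap H^2(0,T;L^2(\Omega))$, so that $\Delta^2\tilde y$ exists in $L^2$ and the traces $\Delta\tilde y|_{\partial\Omega}$, $\tilde y_t(T)$, $\Delta\tilde y(T)$ are all well defined, and then reverse every integration by parts above. This rewrites $A(\tilde y,v)-L(v)=0$ as $\int_0^T\int_\Omega\left(-\tilde y_{tt}+\nu^2\Delta^2\tilde y+\frac1\alpha\tilde y-\tilde y_d\right)v\,dxdt$ plus a surface integral over $[0,T]\times\partial\Omega$ and an end-time integral over $\{T\}\times\Omega$, valid for all $v\in H^{2,1}_0$. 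Testing first against $v\in C_c^\infty((0,T)\times\Omega)$ annihilates both residual integrals and yields the PDE a.e.; with the PDE in hand, varying $\partial_n v$ on $[0,T]\times\partial\Omega$ forces $\nu\Delta\tilde y=0$ there, while varying the trace $v(T)$ forces $(\tilde y_t-\nu\Delta\tilde y)(T)=0$. The Dirichlet and initial conditions $\tilde y=0$ on $\partial\Omega$ and $\tilde y(0)=0$ are automatic from $\tilde y\in H^{2,1}_0$, and passing back through $y=\tilde y+g$ shows $y$ satisfies \eqref{2ord} a.e.

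The main obstacle I anticipate is the correct treatment of the two natural boundary conditions rather than the (routine) interior computation. In the forward direction the delicate point is recognizing that the seemingly unrelated term $\int_\Omega\nu\nabla v_1(T)\cdot\nabla v_2(T)\,dx$ in $A$ is exactly what the end-time condition generates after one spatial integration by parts; in the reverse direction it is the careful selection of test functions that isolates the PDE from each boundary contribution, so that $\nu\Delta\tilde y=0$ on $\partial\Omega$ and $(\tilde y_t-\nu\Delta\tilde y)(T)=0$ can be recovered one at a time. Justifying that each surface and end-time integral genuinely vanishes, and that every trace is meaningful, is where the regularity hypotheses of Lemma~\ref{lemma:regularity}(ii) are indispensable; this part parallels the argument of \cite{GonHZ12}.
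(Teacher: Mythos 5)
Your proposal is correct and follows essentially the same route as the paper, whose proof is simply the remark that both directions follow ``by Green's formula and integration by parts''; you have filled in exactly those computations, including the identification of the end-time term $\nu\int_\Omega\nabla\tilde y(T)\cdot\nabla v(T)\,dx$ with the natural condition $(\tilde y_t-\nu\Delta\tilde y)(T)=0$ and the use of Lemma~\ref{lemma:regularity}(ii) to justify reversing the integrations by parts in the converse direction. No discrepancies to report.
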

\begin{proof}
Assume $y$ is a solution to \eqref{2ord}. By Green's formula and integration by parts it is straight forward to prove that $\tilde{y}=y-g$ satisfies \eqref{weak_2ord}. The other direction follows vice versa.
\end{proof}

In order to show equivalence of the optimal control problem \eqref{mpcocpreduced} over $(0,T)$ to the weak formulation of \eqref{2ord} it remains to prove uniqueness of a solution.

\begin{thrm}\label{thm:unique}
 The solution $y$ to \eqref{weak_2ord} is unique. 
\end{thrm}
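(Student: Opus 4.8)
The plan is to exploit linearity together with the positive definiteness of the symmetric bilinear form $A$. Since \eqref{weak_2ord} is linear, proving uniqueness is equivalent to showing that the homogeneous problem $A(w,v)=0$ for all $v\in H^{2,1}_0(\ic,T;\Omega)$ admits only the trivial solution $w=0$. So I would suppose $\tilde{y}_1,\tilde{y}_2$ both solve \eqref{weak_2ord} and set $w:=\tilde{y}_1-\tilde{y}_2\in H^{2,1}_0(\ic,T;\Omega)$; subtracting the two identities gives $A(w,v)=0$ for every admissible test function $v$.

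The decisive step is to test with $v=w$, which is admissible because $H^{2,1}_0(\ic,T;\Omega)$ is a linear space. Inserting $v_1=v_2=w$ into the definition of $A$ yields
$$A(w,w)=\int_{\ic}^T\int_\Omega\Bigl(|w_t|^2+\nu^2|\Delta w|^2+\tfrac{1}{\alpha}|w|^2\Bigr)\,dx\,dt+\int_\Omega\nu|\nabla w(T)|^2\,dx=0.$$
Every summand is non-negative, so each must vanish individually. In particular the zeroth-order contribution forces $\tfrac{1}{\alpha}\|w\|_{L^2(\ic,T;L^2(\Omega))}^2=0$, and since $\alpha>0$ this gives $w=0$ a.e.\ in space-time, i.e.\ $\tilde{y}_1=\tilde{y}_2$, which is exactly the asserted uniqueness.

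I do not expect a genuine obstacle here: the regularization parameter $\alpha>0$ supplies a coercive zeroth-order term, and the remaining three terms in $A(w,w)$ only reinforce positivity. The single point deserving a line of care is the admissibility of $v=w$ and the fact that the trace $\nabla w(T)$ is well defined; both follow from the embedding $H^{2,1}_0(\ic,T;\Omega)\hookrightarrow C([\ic,T];H_0^1(\Omega))$ obtained by interpolation between $L^2(\ic,T;H^2(\Omega)\cap H_0^1(\Omega))$ and $H^1(\ic,T;L^2(\Omega))$, so that $w(T)\in H_0^1(\Omega)$. If one wished for more than uniqueness, the very same computation shows that $A$ is coercive with respect to the full $\|\cdot\|_{H^{2,1}(\ic,T;\Omega)}$-norm: combining the estimate $\|w\|_{H^2(\Omega)}\le C\|\Delta w\|_{L^2(\Omega)}$ valid for $w\in H^2(\Omega)\cap H_0^1(\Omega)$ (under sufficient regularity of $\partial\Omega$) with the $|w_t|^2$ and $|w|^2$ terms controls $\|w\|_{H^{2,1}(\ic,T;\Omega)}^2$ from below. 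Together with the evident boundedness of $A$, the Lax--Milgram lemma would then yield existence and uniqueness simultaneously, in agreement with Theorem~\ref{thm:existence}.
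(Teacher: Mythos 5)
Your proof is correct and rests on the same mechanism as the paper's: the paper's one-line proof invokes the Lax--Milgram lemma, whose uniqueness assertion is exactly the positive-definiteness $A(w,w)>0$ for $w\neq 0$ that you verify directly by testing the difference of two solutions with itself and using $\alpha>0$. Your closing remark about coercivity in the full $\|\cdot\|_{H^{2,1}(\ic,T;\Omega)}$-norm is precisely what the cited Lax--Milgram route requires, so the two arguments coincide in substance.
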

\begin{proof}

The proof follows along the lines of the proof of \cite[Theorem 2.6]{GonHZ12} {\color{black}and uses Lax-Milgram Lemma (see e.g.\ \cite[\textsection 6.2.1, Theorem 1]{Eva10})}.
\end{proof}

\subsection{Mixed formulation}\label{secmixedform}
 In order to use piecewise linear, continuous finite elements for discretization and avoid the construction of finite element subspaces in $H^2(\Omega)$, we introduce an auxiliary variable $\tilde{w}:=-\nu \Delta \tilde{y}$. This allows to write \eqref{2ord_hom} as a coupled system in $\tilde{y}$ and $\tilde{w}$ as
\begin{equation}\label{2ord_decoupled}
 \left\{
 \begin{array}{rcll}
 -\tilde{y}_{tt}-\nu \Delta \tilde{w}  +\frac{1}{\alpha} \tilde{y} & = &\tilde{y}_d & \text{ in } (\ic,T)\times\Omega,\\
 \nu \Delta \tilde{y} + \tilde{w} & = & 0 &  \text{ in } (\ic,T)\times\Omega,\\
 \tilde{y} & = & 0 &\text{ on } [\ic,T]\times\partial\Omega,\\
  \tilde{w} & = & 0 &\text{ on } [\ic,T]\times\partial\Omega,\\
 \left(\tilde{y}_t-\nu \Delta \tilde{y} \right)(T) & = & 0 &\text{ in }\Omega,\\
 \tilde{y}(\ic) & = & 0 &\text{ in } \Omega.
 \end{array}
 \right.
 \end{equation}
 We introduce the function spaces $Y:=\{v \in H^1((\ic,T);H_0^1(\Omega)): v(\ic) = 0 \text{ in } \Omega \}$, $W:=L^2((\ic,T);H_0^1(\Omega))$ and the product space $X:=Y\times W$. Let us define the following bilinear form
 $$ A_M:X \times X \to \mathbb{R},$$
\begin{align*}
A_M((\tilde{y},\tilde{w}),(v_1,v_2)) &= \displaystyle\int_{\ic}^T \int_\Omega 
 \tilde{y}_{t}(v_1)_t+\nu \nabla \tilde{w} \nabla v_1  + \frac{1}{\alpha} \tilde{y} v_1  -\nu \nabla \tilde{y} \nabla v_2 + \tilde{w} v_2 dxdt  \\
 & \qquad + \displaystyle\int_\Omega  \nu \nabla \tilde{y}(T)\nabla v_1(T)  dx
 \end{align*}
and linear form
$$ L_M: X \to \mathbb{R}, \quad L_M(v_1,v_2) = \displaystyle\int_{\ic}^T \int_\Omega \tilde{y}_d v_1 dxdt.$$
 
\begin{dfntn}
 The weak formulation of the mixed formulation \eqref{2ord_decoupled} is given by: find $(\tilde{y},\tilde{w}) \in X$ which satisfies
 \begin{equation}\label{decoupled_weak}
  A_M((\tilde{y},\tilde{w}),(v_1,v_2)) = L_M(v_1,v_2) \quad \forall (v_1,v_2) \in X.
 \end{equation}

 \end{dfntn}
 
 By analogy with Theorem~\ref{thm:existence} and Theorem~\ref{thm:unique} it can be shown that the mixed variational form \eqref{decoupled_weak} admits at most one solution and that the pair $(\tilde{y},\tilde{w})$ with $\tilde{y}$ denoting the unique solution to \eqref{2ord_hom} and $\tilde{w}:=-\nu \Delta \tilde{y}$ is a solution to the mixed variational form \eqref{decoupled_weak}. This means that the unique solution to \eqref{2ord_hom} defines the solution to the mixed variational form \eqref{decoupled_weak}.\\

Note that 
 $$ A_M((y,w),(y,w)) =  \displaystyle\int_{\ic}^T \int_\Omega 
 y_t^2   + \frac{1}{\alpha}y^2  + w^2 dxdt + \displaystyle\int_\Omega  \nu |\nabla y(T) |^2  dx.
$$
holds. For this reason, we define an energy norm associated with the bilinear form $A_M$ by
  $$ |||(y,w)|||:= \left( \int_{\ic}^T \int_\Omega y_t^2 + \frac{1}{\alpha}  y^2 + w^2 dxdt \right)^{1/2}.$$

\subsection{A-posteriori error estimate for the semi-time discrete mixed form}\label{sec2:timedisc}

Let us now consider a semi-time discretization of \eqref{decoupled_weak} with respect to $\tilde{y}$ while the variable $\tilde{w}$ is kept continuous. We introduce a time grid $0=\tilde{\tau}_0 < \tilde{\tau}_1 < \dots < \tilde{\tau}_m = T$  with $m \in \mathbb{N}$, time step sizes $\Delta \tilde{\tau}_i = \tilde{\tau}_i - \tilde{\tau}_{i-1}$ and time intervals $I_i = (\tilde{\tau}_{i-1},\tilde{\tau}_i]$ for $i=1,\dots,m$. The time discrete space $V^k$ is defined by
$$ V^{k} = \{v \in C^0((\ic,T);H^1(\Omega)): v|_{I_i} \in \mathbb{P}_1(I_i)\},$$
where $\mathbb{P}_1$ denotes the space of linear polynomials. We set $Y^{k}:=V^k \cap Y$.

\begin{dfntn}(Semi-time discrete mixed form)
The semi-time discrete mixed variational form reads as: find $(\tilde{y}^k,\tilde{w}^k) \in Y^k \times W$ such that
 \begin{equation}\label{weak_dis}
A_M((\tilde{y}^k,\tilde{w}^k),(v_1,v_2)) = L_M(v_1,v_2) \quad \forall (v_1,v_2) \in Y^k\times W.
 \end{equation}
 \end{dfntn}
 
  With arguments similar to those used for \eqref{decoupled_weak} we may show that problem \eqref{weak_dis} admits a unique solution.
 
 Let us now derive a residual based error estimate for the semi-time discrete mixed form \eqref{weak_dis}. We associate with $(\tilde{y}^k,\tilde{w}^k)$ the residuals $R_1^k \in Y^*$ and $R_2^k \in W^*$ by
\begin{equation}\label{r1k}
 R_1^k(v_1) = \displaystyle\int_{\ic}^T \int_\Omega \tilde{y}_d v_1  -  
 (\tilde{y}^k)_t (v_1)_t - \nu \nabla \tilde{w}^k \nabla v_1  -\frac{1}{\alpha} \tilde{y}^k v_1 dxdt
 - \displaystyle\int_\Omega  \nu \nabla \tilde{y}^k(T)\nabla v_1(T)   dx
\end{equation}
and 
\begin{equation}\label{r2k}
 R_2^k(v_2) = \displaystyle\int_{\ic}^T \int_\Omega  \nu \nabla \tilde{y}^k \nabla v_2 - \tilde{w}^k v_2  dxdt.
\end{equation}
Next, we derive $L^2$-representations of $R_1^k$ and $R_2^k$ by elementwise integration by parts
\begin{align*}
  R_1^k(v_1) &= \displaystyle\sum_{i=1}^m \int_{I_i} \int_\Omega \left\{ \tilde{y}_d + (\tilde{y}^k)_{tt} + \nu \Delta \tilde{w}^k   - \frac{1}{\alpha} \tilde{y}^k \right\}  v_1  dxdt\\
  & \qquad +  \sum_{i=1}^m \int_\Omega (\tilde{y}^k)_t v_1 \bigg\vert_{I_i} dx +  \int_\Omega  \nu \Delta \tilde{y}^k(T)   v_1(T)  dx 
 \end{align*}
and
$$R_2^k(v_2) = \sum_{i=1}^m \int_{I_i} \int_\Omega \left\{ -\nu \Delta \tilde{y}^k - \tilde{w}^k \right\} v_2 dxdt. $$
\noindent The residual $R_1^k$ fulfills the Galerkin orthogonality 
\begin{equation}\label{R1k_zero}
R_1^k(v_1) = 0 \quad \forall v_1 \in Y^k
\end{equation}
and it further holds true
\begin{equation}\label{R2k_zero}
 R_2^k(v_2) = 0 \quad \forall v_2 \in W.
\end{equation}

\noindent Moreover, for $(\tilde{y},\tilde{w})\in Y \times W$ and $(\tilde{y}^k,\tilde{w}^k)\in Y^k \times W$ it holds for all $(v_1,v_2) \in Y^k \times W$:
\begin{equation*}
 \begin{array}{r c l}
  A_M((\tilde{y}-\tilde{y}^k,\tilde{w}-\tilde{w}^k),(v_1,v_2)) & = & A_M((\tilde{y},\tilde{w}),(v_1,v_2)) - A_M((\tilde{y}^k,\tilde{w}^k),(v_1,v_2))\\[0.5em]
  & = & L_M(v_1,v_2) - A_M((\tilde{y}^k,\tilde{w}^k),(v_1,v_2)) \quad = 0.\\[0.5em]
 \end{array}
\end{equation*}
Further, the residual equation holds true for all $(v_1,v_2) \in Y \times W$:
\begin{equation}\label{reseq}
 \begin{array}{r c l}
  A_M((\tilde{y}-\tilde{y}^k,\tilde{w}-\tilde{w}^k),(v_1,v_2)) 
  & = & R_1^k(v_1) + R_2^k(v_2)  \quad = R_1^k(v_1),\\[0.5em]
 \end{array}
\end{equation}
where the last step follows from \eqref{R2k_zero}. Now, we are in the position to derive a temporal residual based a-posteriori error estimate for the semi-time discrete mixed variational formulation \eqref{weak_dis}.

\begin{thrm}\label{thm:errest} 
Let $(\tilde{y},\tilde{w}) \in X$ denote the solution to \eqref{decoupled_weak} and let $(\tilde{y}^k,\tilde{w}^k) \in Y^k\times W$ denote the solution to \eqref{weak_dis}. Then, the following residual based a-posteriori error estimate holds true:
\begin{equation}\label{est-thm31}
 ||| (\tilde{y}-\tilde{y}^k,\tilde{w}-\tilde{w}^k)|||^2 \leq C \eta^2, 
\end{equation}
with a constant $C>0$ and
\begin{equation}\label{res:2ord}
 \eta^2 = \sum_{i=1}^m \int_{I_i} \int_\Omega  (\Delta \tilde{\tau}_i)^2 \left| \tilde{y}_d + (\tilde{y}^k)_{tt} + \nu \Delta \tilde{w}^k -  \frac{1}{\alpha} \tilde{y}^k \right|^2 dxdt.
\end{equation}
\end{thrm}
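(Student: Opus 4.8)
The plan is to combine the coercivity of $A_M$ on the diagonal with the residual equation \eqref{reseq} and the Galerkin orthogonality \eqref{R1k_zero}. Abbreviate the error by $(e_{\tilde y},e_{\tilde w}):=(\tilde y-\tilde y^k,\tilde w-\tilde w^k)$, and note that $e_{\tilde y}\in Y$ (since $\tilde y\in Y$ and $\tilde y^k\in Y^k\subset Y$) and $e_{\tilde w}\in W$, so $(e_{\tilde y},e_{\tilde w})\in X$. The identity recorded just before the energy norm was introduced gives
$$A_M\big((e_{\tilde y},e_{\tilde w}),(e_{\tilde y},e_{\tilde w})\big)=|||(e_{\tilde y},e_{\tilde w})|||^2+\int_\Omega \nu|\nabla e_{\tilde y}(T)|^2\,dx\ \geq\ |||(e_{\tilde y},e_{\tilde w})|||^2,$$
so the energy norm is controlled by the diagonal of $A_M$; the nonnegative boundary term is simply discarded. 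Testing \eqref{reseq} with $(v_1,v_2)=(e_{\tilde y},e_{\tilde w})$ then reduces everything to bounding $R_1^k(e_{\tilde y})$, since there $A_M((e_{\tilde y},e_{\tilde w}),(e_{\tilde y},e_{\tilde w}))=R_1^k(e_{\tilde y})$.

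The heart of the argument is to insert a temporal interpolant. Because $R_1^k$ annihilates $Y^k$ by \eqref{R1k_zero}, I may subtract any $I_k e_{\tilde y}\in Y^k$ and write $R_1^k(e_{\tilde y})=R_1^k(e_{\tilde y}-I_k e_{\tilde y})$. I would take $I_k$ to be the piecewise-linear-in-time nodal interpolant of $e_{\tilde y}$ at the grid points $t_0,\dots,t_m$, with values in $H_0^1(\Omega)$; since $\mathbb{P}_1(I_i)\subset\mathbb{P}_p(I_i)$ this lies in $V^k$, it is continuous, and it satisfies $I_k e_{\tilde y}(0)=e_{\tilde y}(0)=0$ (both $\tilde y$ and $\tilde y^k$ obey the homogeneous initial condition), so $I_k e_{\tilde y}\in Y^k$. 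The crucial second feature is that matching at $t_m=T$ forces $(e_{\tilde y}-I_k e_{\tilde y})(T)=0$, which eliminates the end-time term $\int_\Omega\{-(\tilde y^k)_t(T)+\nu\Delta\tilde y^k(T)\}v_1(T)\,dx$ in the $L^2$-representation of $R_1^k$. Hence only the elementwise volume residual $\tilde y_d+(\tilde y^k)_{tt}+\nu\Delta\tilde w^k-\tfrac1\alpha\tilde y^k$ survives.

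With the boundary term gone, I would apply the Cauchy--Schwarz inequality on each $I_i$, then the first-order temporal interpolation estimate $\|e_{\tilde y}-I_k e_{\tilde y}\|_{L^2(I_i;L^2(\Omega))}\leq C\,\Delta t_i\,\|(e_{\tilde y})_t\|_{L^2(I_i;L^2(\Omega))}$. This bound follows because $e_{\tilde y}-I_k e_{\tilde y}$ vanishes at both endpoints of $I_i$, so a Poincar\'e inequality on the subinterval together with the slope-stability $\|(I_k e_{\tilde y})_t\|_{L^2(I_i)}\leq\|(e_{\tilde y})_t\|_{L^2(I_i)}$ applies. Summing over $i$ and using Cauchy--Schwarz in the index $i$ produces exactly $C\,\eta\,\|(e_{\tilde y})_t\|_{L^2(\ic,T;L^2(\Omega))}$, and by definition of the energy norm $\|(e_{\tilde y})_t\|_{L^2(\ic,T;L^2(\Omega))}\leq|||(e_{\tilde y},e_{\tilde w})|||$. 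Chaining the steps gives $|||(e_{\tilde y},e_{\tilde w})|||^2\leq C\,\eta\,|||(e_{\tilde y},e_{\tilde w})|||$, and dividing through yields \eqref{est-thm31} after squaring.

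The step I expect to require the most care is the choice of the interpolation operator, which must simultaneously land in the discrete space $Y^k$ (so that Galerkin orthogonality removes it), preserve the endpoint values at $t=0$ and $t=T$ (so that both the membership in $Y^k$ and the cancellation of the end-time term hold), and deliver a bound in terms of only the first time derivative $(e_{\tilde y})_t$ rather than the higher derivatives that generic interpolation theory would involve. The piecewise-linear nodal interpolant resolves all three demands at once, but verifying that the end-time contribution to $R_1^k$ truly vanishes, rather than merely being small, is the decisive structural point on which the clean estimate \eqref{res:2ord} rests.
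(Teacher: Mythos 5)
Your proposal is correct and follows essentially the same route as the paper's proof: coercivity $A_M(e,e)\geq |||e|||^2$, the residual equation tested with the error, Galerkin orthogonality to subtract a temporal interpolant that matches at $t=T$ (killing the end-time residual), a first-order interpolation estimate producing the $\Delta t_i$ weights, and Cauchy--Schwarz followed by the kick-back argument. The only cosmetic difference is that you use the piecewise-linear nodal interpolant with a local estimate on each $I_i$ and justify it directly, whereas the paper invokes a standard interpolation result over patches $\tilde{I}_i$; both yield the same bound up to the constant.
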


\textit{Proof.}  We combine \eqref{R1k_zero} together with \eqref{reseq}. Let for $v_1 \in Y$ be $I_Y^k v_1$ the approximation to $v_1$ from $Y^k$. Then, it is
\begin{equation*}
 \begin{array}{r c l}
  A_M((\tilde{y}-\tilde{y}^k,\tilde{w}-\tilde{w}^k),(v_1,v_2))  =  R_1^k(v_1 - I_Y^k v_1) =\\
   \displaystyle\sum_{i=1}^m \int_{I_i} \int_\Omega r_{1,int}^k(v_1 - I_Y^k v_1) dxdt +  \int_\Omega \nu \Delta \tilde{y}^k(T) (v_1 - I_Y^k v_1)(T) dx \\
   + \displaystyle\sum_{i=1}^m \int_\Omega (\tilde{y}^k)_t (v_1-I_Y^k v_1)\bigg\vert_{I_i} dx,
 \end{array}
\end{equation*}
where we use the notation $r_{1,int}^k:= \tilde{y}_d + (\tilde{y}^k)_{tt} + \nu \Delta \tilde{w}^k- \frac{1}{\alpha} \tilde{y}^k$. Note that the last summands vanish since $(v_1-I_Y^k v_1)(\tilde{\tau}_i) = 0$ for $i=0,\dots,m$. We can estimate using Cauchy-Schwarz
$$ |A_M((\tilde{y}-\tilde{y}^k,\tilde{w}-\tilde{w}^k),(v_1,v_2))| \leq \int_\Omega \left( \sum_{i=1}^m \| r_{1,int}^k\|_{L^2(I_i)} \| v_1 - I_Y^k v_1 \|_{L^2(I_i)} \right) dx.$$
Next, using standard interpolation properties (see e.g.\ \cite[Theorem 1.7]{AinO00}), we arrive at
\begin{equation*}
 |A_M((\tilde{y}-\tilde{y}^k,\tilde{w}-\tilde{w}^k),(v_1,v_2))| \leq \int_\Omega \left( \sum_{i=1}^m \| r_{1,int}^k \|_{L^2(I_i)} \; c_1 \; \Delta \tilde{\tau}_i | v_1 |_{H^1(\tilde{I}_i)} \right) dx,
\end{equation*}
where $\tilde{I}_i$ denotes the set of intervals which share a vertex with $I_i$. We recall that $| . |_{H^1}$ denotes the $H^1$-seminorm. Together with the Cauchy-Schwarz inequality for sums, we arrive at
\begin{equation}\label{est:AM}
\begin{array}{l l l}
  |A_M((\tilde{y}-\tilde{y}^k,\tilde{w}-\tilde{w}^k),(v_1,v_2))| \\
 \qquad\qquad\qquad\qquad \leq  c_1 \displaystyle\int_\Omega \left( \sum_{i=1}^m \| r_{1,int}^k \|_{L^2(I_i)}^2 (\Delta \tilde{\tau}_i)^2 \right)^{1/2} \left( \sum_{i=1}^m | v_1 |_{H^1(\tilde{I}_i)}^2 \right)^{1/2} dx   \\
\qquad\qquad\qquad\qquad   \leq  c_2 \displaystyle\int_\Omega \left( \sum_{i=1}^m \| r_{1,int}^k \|_{L^2(I_i)}^2 (\Delta \tilde{\tau}_i)^2 \right)^{1/2}  | v_1 |_{H^1(0,T)} dx \\
\qquad\qquad\qquad\qquad   \leq  c_2  \left(\displaystyle\int_\Omega \sum_{i=1}^m \| r_{1,int}^k \|_{L^2(I_i)}^2 (\Delta \tilde{\tau}_i)^2 dx \right)^{1/2} \left( \displaystyle\int_\Omega | v_1 |_{H^1(0,T)}^2 dx \right)^{1/2},
\end{array}
\end{equation}
where we use H\"older's inequality in the last step. We note that  
$$\left( \displaystyle\int_\Omega | v_1 |_{H^1(0,T)}^2 dx \right)^{1/2} \leq \left( \int_0^T \int_\Omega (v_1)_t^2 + \frac{1}{\alpha} v_1^2 + v_2^2 dxdt \right)^{1/2} = ||| (v_1,v_2)|||.$$ 
In \eqref{est:AM} we choose $v_1:= \tilde{y}-\tilde{y}^k$ and $v_2 := \tilde{w}-\tilde{w}^k$ and denote $e:=(\tilde{y}-\tilde{y}^k,\tilde{w}-\tilde{w}^k)$, which leads to 
$$ |A_M(e,e)| \leq c_2 \left( \int_\Omega \sum_{i=1}^m \|r_{1,int}^k \|_{L^2(I_i)}^2 (\Delta \tilde{\tau}_i)^2 dx \right)^{1/2} \cdot ||| e |||.$$
By the definition of the energy norm $||| \cdot |||$, it follows $A_M(e,e) \geq |||e|||^2$ which yields the a-posteriori error estimate
$$ |||e |||^2 \leq  C \left( \int_\Omega \sum_{i=1}^m \|r_{1,int}^k \|_{L^2(I_i)}^2 (\Delta \tilde{\tau}_i)^2 dx \right). \hspace{1cm}\square$$

\begin{rmk}[Adaptive cycle]
 In order to construct an adaptive time grid, we follow the standard 
 \begin{center}
solve $\to$ estimate $\to$ mark $\to$ refine  
 \end{center}
 cycle. In practice, we solve \eqref{weak_dis} using rectangular space-time finite elements. Then, the error in each time interval is estimated using \eqref{est-thm31}. The intervals with the largest errors are marked using the D\"orfler marking strategy \cite{Doe96}. For refinement, we perform a bisection of the marked intervals. We iterate this loop until the time grid has a prescribed number of e.g.\ $N$ time instances.
\end{rmk}
\begin{rmk}[Heuristic assumption]\label{rem:heuristic}
Note that we derived an error estimate \eqref{est-thm31} for a time discrete formulation in $y$ whereas $w$ is kept continuous. In practice, we solve a fully space-time discrete mixed variational formulation, but still use the error estimate for the semi-time discrete form to construct an adaptive time grid. For this, we assume that the temporal discretization of $y^k$ is insensitive with respect to the spatial discretization. In fact, numerical studies in \cite{AllGH16,AllGH18} show that temporal and spatial discretization decouple for the considered problem settings. In addition, we also assume that a temporal discretization of $w^k$ does not strongly influence the error estimate. Of course, these heuristic assumptions might not hold in general. For this reason, we will in future research derive a-posteriori error estimates for a fully space-time discrete mixed variational form.
\end{rmk}

With the help of \eqref{est-thm31}, we are able to refine the time grid by means of the residual of the system \eqref{2ord_decoupled}. This property will constitute the major building block for the time-adaptive approach in the MPC framework as discussed in the next Sections~\ref{secadaptivity}~and~\ref{secnumerics}.

\subsection{State equation with depletion term}\label{secdepletion}
Let us now consider an optimal control problem of the form \eqref{mpcocpreduced}, where an additional depletion term in the state equation appears as
\begin{equation}\label{heatdepletion}
\left\{
\begin{array}{rcll}
y_t-\nu\Delta y - \mu y& = & f+u &\text{ in } (\ic,T]\times\Omega,\\
y &= & 0 &\text{ on } [\ic,T] \times \partial \Omega,\\
y(\ic) & = & y_0 & \text{ in } \Omega,
\end{array}
\right.
\end{equation}
with $\mu > 0$. 
The reformulation of the associated optimality system into an elliptic equation and an associated mixed formulation, respectively, follows along the lines of Sections~\ref{sec2:subreform} and \ref{secmixedform}. In particular, the mixed formulation reads as
\begin{equation}\label{2ord_decoupled_depletion}
 \left\{
 \begin{array}{rcll}
 -\tilde{y}_{tt}-\nu \Delta \tilde{w} + 2 \nu \mu \Delta \tilde{y} +\left(\frac{1}{\alpha}+\mu^2\right)\tilde{y} & = &\tilde{y}_d & \text{ in } (\ic,T)\times\Omega,\\
 \nu \Delta \tilde{y} + \tilde{w} & = & 0 &  \text{ in } (\ic,T)\times\Omega,\\
 \tilde{y} & = & 0 &\text{ on } [\ic,T]\times\partial\Omega,\\
  \tilde{w} & = & 0 &\text{ on } [\ic,T]\times\partial\Omega,\\
 \left(\tilde{y}_t-\nu \Delta \tilde{y} - \mu \tilde{y}\right)(T) & = & 0 &\text{ in }\Omega,\\
 \tilde{y}(\ic) & = & 0 &\text{ in } \Omega.
 \end{array}
 \right.
 \end{equation}
Let us define the bilinear form
 $$ A_M^\mu:X \times X \to \mathbb{R},$$
$$A_M^\mu((\tilde{y},\tilde{w}),(v_1,v_2)) = \displaystyle\int_{\ic}^T \int_\Omega 
( \tilde{y}_{t}(v_1)_t+\nu \nabla \tilde{w} \nabla v_1 - 2 \nu \mu \nabla \tilde{y} \nabla v_1 +\left(\frac{1}{\alpha}+\mu^2\right)\tilde{y} v_1  $$  
 $$ -\nu \nabla \tilde{y} \nabla v_2 + \tilde{w} v_2 )dxdt  +  \displaystyle\int_\Omega  \nu \nabla \tilde{y}(T)\nabla v_1(T) - \mu \tilde{y}(T) v_1 (T) dx
$$
and linear form 
$$ L_M^\mu: X \to \mathbb{R}, \quad L_M^\mu(v_1,v_2) = \displaystyle\int_{\ic}^T \int_\Omega \tilde{y}_d v_1 dxdt,$$
where $\tilde{y}_d:= \frac{1}{\alpha} y_d - f_t - \nu \Delta f - \mu f + g_{tt} - \nu^2 \Delta^2 g - 2 \nu \mu \Delta g - (\frac{1}{\alpha} + \mu^2) g$. 
\begin{dfntn}
 The weak formulation of the mixed formulation \eqref{2ord_decoupled_depletion} is given by: find $(\tilde{y},\tilde{w}) \in X$ which satisfies
 \begin{equation}\label{decoupled_weakdepletion}
  A_M^\mu((\tilde{y},\tilde{w}),(v_1,v_2)) = L_M^\mu(v_1,v_2) \quad \forall (v_1,v_2) \in X.
 \end{equation}
 \end{dfntn}

The semi-time discrete mixed variational formulation then reads as
\begin{equation}\label{semitimediscrete_depletion}
 A_M^\mu((\tilde{y}^k,\tilde{w}^k),(v_1,v_2)) = L_M^\mu(v_1,v_2) \quad \forall (v_1,v_2) \in Y^k \times W.
\end{equation}

With similar arguments as in the previous sections, one can show existence of a unique solution of the involved equations provided sufficient regularity of the data.
 
In analogy to Theorem~\ref{thm:errest} we can derive a temporal residual based a-posteriori error estimate for \eqref{semitimediscrete_depletion}.

\begin{thrm}\label{thm:errest_depl} 
Let $(\tilde{y},\tilde{w}) \in X$ denote the solution to \eqref{decoupled_weakdepletion} and let $(\tilde{y}^k,\tilde{w}^k) \in Y^k\times W$ denote the solution to \eqref{semitimediscrete_depletion}. Further, let $\mu \leq \nu/c_p^2$, where $c_p$ denotes the Poincar\'{e} constant. Then, the following residual based a-posteriori error estimate holds true:
\begin{equation}\label{est-thm31mu}
 ||| (\tilde{y}-\tilde{y}^k,\tilde{w}-\tilde{w}^k)|||^2 \leq C \eta^2, 
\end{equation}
with a constant $C>0$ and
\begin{equation}\label{res:2ordmu}
 \eta^2 = \sum_{i=1}^m \int_{I_i} \int_\Omega (\Delta \tilde{\tau}_i)^2 \left| \tilde{y}_d + (\tilde{y}^k)_{tt} + \nu \Delta \tilde{w}^k -2 \nu \mu \Delta \tilde{y}^k -  \left(\frac{1}{\alpha} +\mu^2 \right)\tilde{y}^k \right|^2 dxdt.
\end{equation}
\end{thrm}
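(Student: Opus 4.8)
The plan is to follow the proof of Theorem~\ref{thm:errest} essentially verbatim and to isolate the one place where the depletion term changes the argument, namely the coercivity of $A_M^\mu$ on the error. Writing $e=(e_y,e_w):=(\tilde{y}-\tilde{y}^k,\tilde{w}-\tilde{w}^k)$, I would first reproduce the residual machinery: the Galerkin orthogonality $R_1^k(v_1)=0$ for $v_1\in Y^k$, the residual identity $A_M^\mu(e,(v_1,v_2))=R_1^k(v_1)$ for all $(v_1,v_2)\in Y\times W$, and the elementwise $L^2$-representation of $R_1^k$ now carry the modified interior residual $r_{1,int}^k:=\tilde{y}_d+(\tilde{y}^k)_{tt}+\nu\Delta\tilde{w}^k-2\nu\mu\Delta\tilde{y}^k-(\tfrac1\alpha+\mu^2)\tilde{y}^k$, while the end-time residual again drops out because the interpolation error vanishes at $t=T$. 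The Cauchy--Schwarz, interpolation and H\"older steps leading to
\[
|A_M^\mu(e,e)|\le c_2\Bigl(\int_\Omega\sum_{i=1}^m\|r_{1,int}^k\|_{L^2(I_i)}^2(\Delta t_i)^2\,dx\Bigr)^{1/2}\,|||e|||
\]
are unchanged, since the bound $(\int_\Omega|v_1|_{H^1(0,T)}^2dx)^{1/2}\le|||(v_1,v_2)|||$ only uses the $\partial_t$-part of the energy norm.

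The genuinely new step is a lower bound $A_M^\mu(e,e)\ge c_\mu|||e|||^2$ with $c_\mu>0$. Testing the bilinear form on the diagonal, the two gradient cross terms $\nu\nabla e_w\nabla e_y$ and $-\nu\nabla e_y\nabla e_w$ cancel and I obtain
\[
A_M^\mu(e,e)=\int_{\ic}^{T}\!\!\int_\Omega\Bigl(e_{y,t}^2-2\nu\mu|\nabla e_y|^2+(\tfrac1\alpha+\mu^2)e_y^2+e_w^2\Bigr)dxdt+\int_\Omega\bigl(\nu|\nabla e_y(T)|^2-\mu e_y(T)^2\bigr)dx.
\]
The end-time integral is where the hypothesis enters: since $e_y(T)\in H_0^1(\Omega)$, the Poincar\'e inequality gives $\int_\Omega e_y(T)^2\le c_p^2\int_\Omega|\nabla e_y(T)|^2$, so the integral is bounded below by $(\nu-\mu c_p^2)\|\nabla e_y(T)\|_{L^2(\Omega)}^2\ge0$ precisely when $\mu\le\nu/c_p^2$.

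For the indefinite interior term $-2\nu\mu|\nabla e_y|^2$ I would exploit the second (mixed) equation. Subtracting the continuous and the semi-discrete identities obtained by testing with $(0,v_2)$ yields $\int_{\ic}^T\!\int_\Omega\nu\nabla e_y\nabla v_2=\int_{\ic}^T\!\int_\Omega e_w v_2$ for all $v_2\in W$; choosing $v_2=e_y\in Y\subset W$ gives the key relation $\nu\int_{\ic}^T\!\int_\Omega|\nabla e_y|^2=\int_{\ic}^T\!\int_\Omega e_w e_y$. Substituting this converts the interior part into $\int_{\ic}^T\!\int_\Omega\bigl(e_{y,t}^2+\tfrac1\alpha e_y^2+(e_w-\mu e_y)^2\bigr)$. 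The pointwise quadratic form $\tfrac1\alpha e_y^2+(e_w-\mu e_y)^2$ has matrix $\bigl(\begin{smallmatrix}\frac1\alpha+\mu^2&-\mu\\-\mu&1\end{smallmatrix}\bigr)$ with determinant $\tfrac1\alpha>0$, hence is positive definite and dominates $c_\mu(\tfrac1\alpha e_y^2+e_w^2)$ with a constant $c_\mu>0$ depending only on $\alpha$ and $\mu$. Together with the nonnegative end-time term this establishes $A_M^\mu(e,e)\ge\min(1,c_\mu)|||e|||^2$; combining with the upper bound, dividing by $|||e|||$ and squaring then yields \eqref{est-thm31mu} with $\eta$ as in \eqref{res:2ordmu}.

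I expect the coercivity to be the main obstacle: the term $-2\nu\mu|\nabla e_y|^2$ is not controlled by the energy norm, and a naive Poincar\'e estimate points in the wrong direction. The resolution is to use the constraint relation to rewrite it as the cross term $-2\mu e_w e_y$ and complete the square, which makes the interior coercivity hold unconditionally in $\alpha,\mu$, while the hypothesis $\mu\le\nu/c_p^2$ is needed only to keep the end-time boundary contribution nonnegative.
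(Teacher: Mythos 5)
Your proposal is correct and follows essentially the same route as the paper: the residual/interpolation machinery is reused verbatim, and the new ingredient is coercivity of $A_M^\mu$ on the error, obtained by converting $-2\nu\mu\int_0^T\!\!\int_\Omega|\nabla e_y|^2$ into the cross term $-2\mu\int_0^T\!\!\int_\Omega e_w e_y$ and absorbing it --- you by completing the square, the paper by Young's inequality with $\delta=(1+2\alpha\mu^2)/(8\alpha\mu^2)$, which amounts to the same computation --- while the end-time term is handled identically via Poincar\'e under $\mu\le\nu/c_p^2$. Your derivation of the key identity $\nu\int_0^T\!\!\int_\Omega|\nabla e_y|^2\,dx\,dt=\int_0^T\!\!\int_\Omega e_w e_y\,dx\,dt$ from the difference of the continuous and semi-discrete second equations tested with $v_2=e_y$ is in fact slightly more careful than the paper's appeal to the pointwise relation $\tilde w=-\nu\Delta\tilde y$ plus Green's formula, since that relation holds for the exact solution but must be justified for the error pair exactly as you do.
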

\begin{proof}
The proof follows along the lines of the proof of Theorem~\ref{thm:errest}. Note that it holds
\begin{align}\label{eqthm}
 A_M^\mu((\tilde{y},\tilde{w}),(\tilde{y},\tilde{w})) &= \int_0^T \int_\Omega \tilde{y}_t^2 -2 \nu \mu |\nabla \tilde{y}|^2 + \left(\frac{1}{\alpha} + \mu^2 \right) \tilde{y}^2 + \tilde{w}^2 dxdt \nonumber\\
&+  \int_\Omega \nu |\nabla \tilde{y}(T)|^2 - \mu |\tilde{y}(T)|^2 dx.
\end{align}
Using Green's formula, the definition of $\tilde{w}$ and Young's inequality, we can estimate the second summand in \eqref{eqthm} by
\begin{equation*}
 \begin{array}{r c l}
  \displaystyle\int_0^T \int_\Omega - 2 \nu \mu |\nabla \tilde{y}|^2 dxdt  =  \displaystyle\int_0^T \int_\Omega 2 \nu \mu \Delta \tilde{y} \; \tilde{y} \;dxdt = \int_0^T \int_\Omega -2  \mu \tilde{w} \tilde{y} \; dxdt \\
  \geq \displaystyle\int_0^T \int_\Omega -2  \mu |\tilde{w}| \; |\tilde{y}| \; dxdt
   \geq  \displaystyle\int_0^T \int_\Omega - 4 \delta \mu^2 \tilde{y}^2 - \frac{1}{4\delta} \tilde{w}^2 \; dxdt.
 \end{array}
\end{equation*}
With the choice $\delta:= \displaystyle\frac{1+2\alpha\mu^2}{8 \alpha \mu^2}$, it holds that $ -4\delta \mu^2 + \frac{1}{\alpha} + \mu^2 \geq 0$ and $ -\frac{1}{4\delta} + 1 \geq 0$.\\

Using the Poincar\'{e} inequality, we can estimate the last term in \eqref{eqthm} by
$$\int_\Omega \nu |\nabla y(T)|^2 - \mu |y(T)|^2 dx \geq \left( \frac{\nu}{c_p^2} - \mu \right) \|  y(T) \|_{L^2(\Omega)}^2$$
with Poincar\'{e} constant $c_p$. If $\mu \leq \nu / c_p^2$, then $\displaystyle\int_\Omega \nu |\nabla y(T)|^2 - \mu |y(T)|^2 dx \geq 0$. Thus, for $\mu \leq \nu / c_p^2$ it holds that
$$ A_M^\mu((\tilde{y},\tilde{w}),(\tilde{y},\tilde{w})) \geq ||| (\tilde{y},\tilde{w})|||^2.$$
With this, the a-posteriori error estimate follows in analogy to Theorem~\ref{thm:errest}.
\end{proof}

\subsection{Control constraints, abstract controls and state constraints}\label{constraints}
The case of partially supported controls and control constraints can be treated by switching to an elliptic system for the adjoint state $p$. In particular, we can consider linear and bounded control operators $B: U \to L^2((0,T);H^{-1}(\Omega))$ mapping controls to feasible right hand sides, where $U$ denotes a real Hilbert space, and control constraints $u \in U_{\text{ad}} \subseteq U$ with $U_{\text{ad}}$ describes a convex, bounded and closed set of admissible controls. Under the corresponding regularity assumptions similar to those in  Lemma~\ref{lemma:regularity}, the associated optimality system can be reformulated into an elliptic equation of the form
\begin{equation}\label{elliptic_p}
 \left\{
 \begin{array}{rcll}
 -p_{tt} +\nu^2 \Delta^2 p - B\mathbb{P}_{U_{\text{ad}}}\left\{-\frac{1}{\alpha} B^*p\right\} & = & f + \nu \Delta y_d - (y_d)_t & \text{ in } (\ic,T)\times\Omega,\\
 p & = & 0 &\text{ on } [\ic,T]\times\partial\Omega,\\
 \nu \Delta p  & = & y_d &\text{ on } [\ic,T]\times\partial\Omega,\\
 (-p_t - \nu \Delta p)(0) & = & y_0 - y_d(0) &\text{ in }\Omega,\\
 p(T) & = & 0 &\text{ in } \Omega.
 \end{array}
 \right.
 \end{equation}
 with $B^*$ denoting the dual operator to $B$ and $\mathbb{P}_{U_{\text{ad}}}$ denoting the projection operator onto the admissible control space. An a-posteriori error estimate can be derived analogously, see \cite{AllGH18} for more details. Using a regularization of the projection operator, it is also possible to derive an elliptic equation for the state, see \cite{NeiPS11}.\\
 Further, we note that the procedure above can be extended to the treatment of state constraints by e.g.\ adapting the approach of \cite{LiuGY09}. This is to consider the reduction to the elliptic space-time formulation for the state obeying state constraints. However, for the proof of concept we, in the present work, avoid the incorporation of additional constraints and other practical relevant control operators.

 \section{Time adaptivity in MPC}\label{secadaptivity}
In this section, we propose the use of a time-adaptive technique within MPC. In the classical application of MPC algorithms the length of the application horizon is fixed a priori to the length $\tau:= \frac{\bar T}{N-1}$, and the prediction horizon $[t_i,t_i^N]$ is discretized equidistantly with a time grid containing $N$ time instances $t_i^j:=t_i+(j-1)\tau$ for $j=1,\dots,N$. This might not be ideal in practice. 
Therefore, we here would like to reply to the following questions:
\medskip

 \textit{(i) How to choose a time discretization for the prediction horizon $[t_i,t_i+\bar T]$ in each level $i$ of the MPC?}
 
 \textit{(ii) How to choose the length of the application horizon $\tau_i$ in each level $i$ of the MPC to implement the feedback control?}
\medskip

We aim at computing the temporal discretization to identify the important dynamical structures according to the optimization goal. We propose an adaptive strategy which avoids unnecessary small uniform temporal discretizations and realize an efficient implementation. The proposed approach will lead to adaptive time discretizations which are related to the \textit{optimal} state for each of the MPC subproblems.Thus, we directly obtain the application horizon where we compute the feedback map.

The idea of adaptivity leads to different combinations using the error estimate \eqref{est-thm31mu}. Here, we will deal with an adaptive grid in each subinterval for a fixed prediction horizon where the time discretization is computed on the fly. For a different adaptive concept based on goal-oriented adaptivity, see the recent work \cite{GSSnew}.

Therefore, for a given prediction interval $[t_i,t_i^{N}]$ at each MPC iteration $i$, we make use of the a-posteriori error estimation {\color{black} \eqref{est-thm31mu}} for the state to compute an adaptive time grid within the current time horizon. Note that $t_0 := 0$ is the initial time. The scheme is visualized in Figure~\ref{fig:MPConline}.
 
  \begin{figure}[htbp]
   \centering
    \begin{tikzpicture}[scale=1.8]
       \draw[->,gray,line width=0.5mm] (-1.3,0) -- (4.7,0);
       
         \fill[blue] (-0.8,-0.1) rectangle (-0.85,0.1);    
          \fill[blue] (0.1,-0.1) rectangle (0.15,0);
          \fill[purple] (0.1,0) rectangle (0.15,0.1);
          \fill[purple] (0.46,-0.1) rectangle (0.51,0.1);
          \fill[blue] (0.52,-0.1) rectangle (0.57,0.1);
          \fill[purple] (0.8,-0.1) rectangle (0.85,0.1);
          \fill[blue] (0.87,-0.1) rectangle (0.92,0.1);
          \fill[blue] (1.6,-0.1) rectangle (1.65,0.1);
         \fill[purple] (1.8,-0.1) rectangle (1.85,0.1);
          \fill[purple] (2.8,-0.1) rectangle (2.85,0.1);
          \fill[blue] (3,-0.1) rectangle (3.05,0.1);
          \fill[purple] (3.92, -0.1) rectangle (3.97,0.1);
          \draw[line width=0.5mm] (-0.9,-0.25) node {\color{blue} $t_i$};
          \draw[line width=0.5mm] (3,-0.25) node {\color{blue} $t^N_{i}$};
          \draw[blue,line width=0.5mm] (-0.8,-0.5) -- (3.02,-0.5);
          \draw[blue,line width=0.5mm] (-0.8,-0.5) -- (-0.8,-0.35);
          \draw[blue,line width=0.5mm] (3.02,-0.5) -- (3.02,-0.35);
          \draw (1,-0.7) node {\footnotesize \color{blue} prediction horizon};
          \draw[purple,line width=0.5mm] (0.1,0.55) -- (3.92,0.55);
          \draw[purple,line width=0.5mm] (0.1,0.4) -- (0.1,0.55);
          \draw[purple,line width=0.5mm] (3.92,0.4) -- (3.92,0.55);
          \draw (2,0.7) node {\footnotesize \color{purple} shifted prediction horizon };
          \draw (4, 0.25) node{\color{purple} $t_{i+1}^{N}$};
          \draw (0.15, 0.25) node{\color{purple} $t_{i+1} = t_i+\tau_i$};
          \draw (5,0) node {\footnotesize \color{gray} time};
       \end{tikzpicture}
       \caption{Scheme of adaptive approach: The blue color refers to the grid at iteration $i$ starting at time $t_i$ till $t_i^N$ whereas the red color refers to the next MPC level $i+1$. We note that the only guaranteed overlap of the time grids is for second time instance at iteration $i$ which corresponds to the first time instance at iteration $i+1$.}\label{fig:MPConline}
       \end{figure}
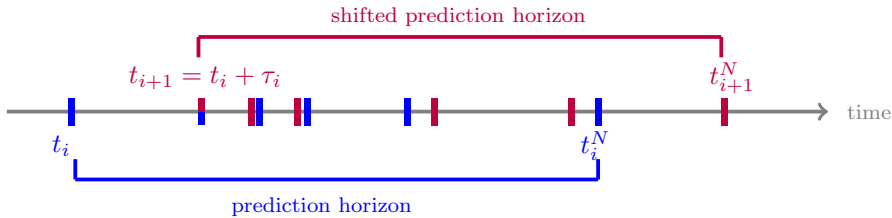
 For a given number of degrees of freedom $N$ the algorithm distributes the time instances within the prediction horizon $[t_i,t_i^N]$ according to the error estimation \eqref{est-thm31mu}, where we assume that all prediction horizons have the same length $t_i^{N}-t_i = \bar{T}$. The resulting adaptive time grid at each time instance $t_i$ is related to the \textit{optimal} state of the corresponding open loop subproblem of the current MPC step. Again, we assume that the heuristic assumptions of Remark~\ref{rem:heuristic} hold true which enables an efficient computation. The approach is summarized  in Algorithm~\ref{Alg:NMPC_on} in Section \ref{sec:intro} .

 \begin{rmk}[Warm start]
  In order to make computations even more efficient, the information of the previous MPC iteration can be used as a \textit{warm} start for the next MPC iteration. In particular, after a coarsening step of the previous adaptive time grid, this grid can be used as an initial adaptive time grid for the next prediction horizon. Furthermore, to improve the inner open-loop solver in each iteration {\color{black} one can use} as initial control the one computed at the previous step.
 \end{rmk}

\begin{rmk}[Efficiency under perturbations]
This approach allows to compute the best time grid for every iteration of the MPC method. The grid will, in general, not result to be equidistant. This approach is particular sensitive to perturbations on the system. Therefore, it will automatically react and deliver  an adaptive grid taking into account the current measurements that might or might not contain an error. 
\end{rmk}

\section{Numerical example}\label{secnumerics} In the following tests, we investigate numerically the  time-adaptive MPC algorithm proposed in Section~\ref{secadaptivity}. In all numerical examples, the considered spatial domain is the open interval $\Omega = (0,1)$. In order to solve the mixed form \eqref{2ord_decoupled_depletion}, we introduce a partitioning of the space-time domain into regular orthotopes and use $\mathbb{Q}_1$ space-time finite elements for discretization, where $\mathbb{Q}_1$ is the space of polynomials of separate degree up to $1$. We solve the equation with a direct solver using a coarse spatial resolution. For the solution of the MPC open loop subproblems, we use an implicit Euler scheme for the temporal discretization and use piecewise linear and continuous finite elements for the spatial discretization. The optimal control problem is solved with a direct solver, where we take a fine spatial resolution an equidistant discretization with $\Delta x = 1/100$. All coding is done in \textsc{Matlab R2019}a.

\subsection{Test 1: State equation with depletion term and random disturbances}

In this numerical test, we consider an optimal control problem where the state dynamics are governed by \eqref{heatdepletion} with $\mu > 0$. Let us note that the Poincar\'{e} constant $c_p$ and the first eigenvalue $\lambda_1$ of the Laplace-Dirichlet operator are related by $\lambda_1 = 1/c_p^2$ (see, e.g., \cite[Proposition 8.4.3]{AttBM14}). For the considered domain $\Omega = (0,1)$, the first eigenvalue $\lambda_1$ of the Laplace-Dirichlet operator is given by $\lambda_1 = \pi^2$ (see, e.g., \cite[Proposition 8.5.2]{AttBM14}). Then, since Theorem~\ref{thm:errest_depl} is applicable if $\mu \leq \nu / c_p^2$, for this setting it requires $\mu \leq \nu \cdot \pi^2$. In this example, we set $\nu =0.1$ and $\mu = 5$. Thus, we consider an unstable case which goes beyond the assumptions of Theorem~\ref{thm:errest_depl}. Nevertheless, we will see that the numerical tests under this configuration still provide satisfactory results, very similar to a stable case with $\mu\leq \nu\pi^2$ as required in Theorem~\ref{thm:errest_depl}. 
The initial condition for the state is chosen as $y_\circ(x) \equiv 0$ and the source term in the state equation is set to $f(t,x)\equiv 0$. The regularization parameter in the cost is chosen as $\alpha = 10^{-3}$ and the desired state is given by 
$$y_d(t,x) = -10 | x- 0.25| - 10 |x-0.75| + 10,$$
which is a stationary state and shown in Figure~\ref{fig:test3yd} (left). Thus, the goal of the optimal control problem is to steer the state $y$, which fulfills \eqref{heatdepletion} in a weak sense, as close as possible to the desired state $y_d$ and keep it there (for an infinite amount of time). In Figure~\ref{fig:test3yd} (middle) we show the controlled state solution using Algorithm~\ref{Alg:NMPC_on} with the choices $N=20, \bar{T}=0.5$ and plot the adaptive time grid for the first prediction horizon $[0,0.5]$ in Figure~\ref{fig:test3yd} (right). 
\begin{figure}[htbp]
\centering
  \includegraphics[scale=0.3]{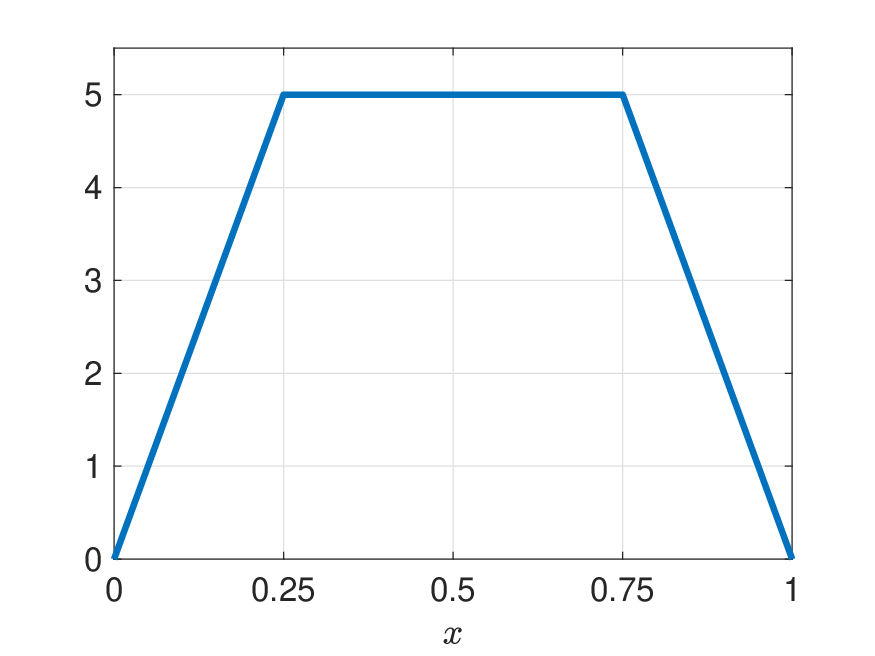} \hspace{-0.4cm}
    \includegraphics[scale=0.3]{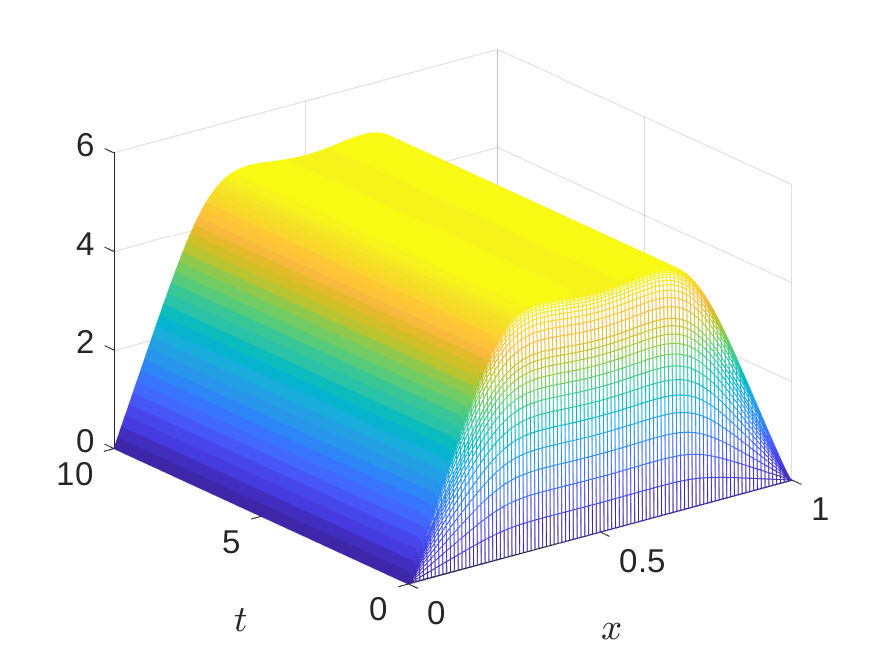} \hspace{-0.4cm}
  \includegraphics[scale=0.3]{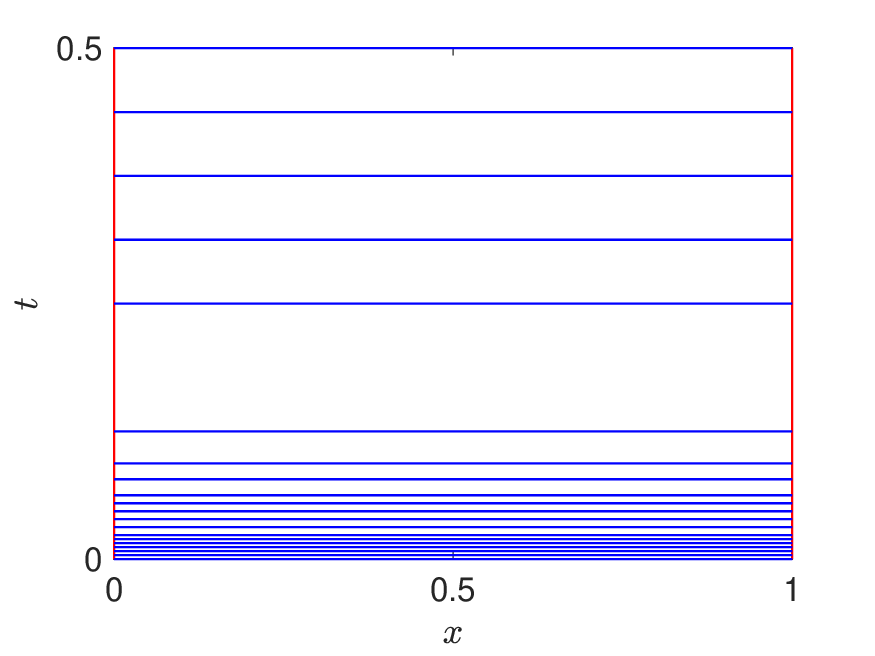}  
\caption{Test 1: Desired state $y_d$ (left), controlled state (middle), adaptive time grid for prediction horizon $[0,0.5]$ (right).}\label{fig:test3yd}
\end{figure}
For a cheap computation of the adaptive time grid, we solve \eqref{2ord_decoupled_depletion} with a coarse spatial resolution of $\Delta x = 1/4$, compare Remark~\ref{rem:heuristic}. We observe a fine temporal discretization toward $t=0$, where the initial state must be steered from $y_\circ(x)=0$ as close as possible to the desired state.\\

\noindent In realistic scenarios, however, often disturbances enter the system, see Figure~\ref{fig:closedloop} for a schematic presentation. 
\begin{figure}[h]
   \centering
    \begin{tikzpicture}[scale=1.5]     
   \node (OCP) at (0,0) [rectangle,fill=gray!50,draw, text width = 4cm,align=center] {\small \color{black}Finite horizon open-loop optimal control problem\\[2ex] $\min \hat{J}^N(u,t_0^i,y_0^i)$};
   \node (Sim) at (0,-1.5) [rectangle,fill=gray!50,draw, text width = 4.8cm,align=center] {\small \color{black}State equation\\[2ex] $y^N(t) = y_{[\phi^N,t_0^i,y_0^i]}(t)$ on $(t_0^i,t_1^i]$};
  \draw[thick,-] (1.5,0) -- (2.2,0);
  \draw[thick,-] (2.2,0) -- node[anchor=west,text width = 3cm]{\small Model predictive feedback value $\phi^N$} (2.2,-1.5);
  \draw[thick,->] (2.2,-1.5) -- (1.8,-1.5);
  \draw[thick,-] (-1.8,-1.5) -- (-2.2,-1.5);
  \draw[thick,-] (-2.2,-1.5) -- node[anchor=east,text width = 2.5cm,align=right]{\small Initial value $y_{i+1}=y^N(t_{i+1})$} (-2.2,0);
    \draw[thick,->] (-2.2,0) -- (-1.5,0);
    \draw[thick,->] (-2.6,0.2) -- (-1.5,0.2);
    \node (in) at (-3.2,0.2) [rectangle,fill=gray!50,draw, text width = 1.3cm,align=center] {\small \color{black}Problem\\ data};
    \draw[thick,->] (0,-2.5) -- node[anchor=north,text width = 1.9cm]{\small \\[1ex] Disturbances} (0,-2);
       \end{tikzpicture}
       \caption{Scheme of MPC with disturbances.}
       \label{fig:closedloop}
       \end{figure}
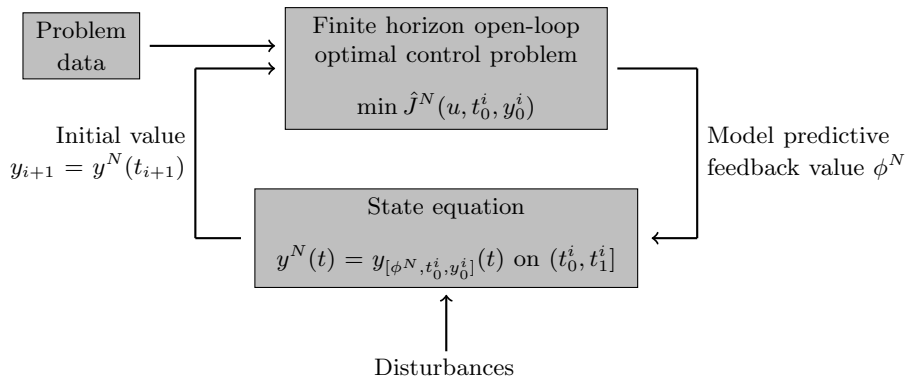  
In particular, we focus on disturbances which happen at random time points $\{\omega_\kappa\}_{\kappa=1}^K$ in the source term $f$ and current state $y_i$ of random magnitudes $\{(\chi_\kappa,\psi_\kappa)\}_{\kappa=1}^K$ leading to a disturbed initial value $y_{i+1}=y^N(t_{i+1})$ for the next MPC loop. In particular, if $\omega_\kappa \in (t_i,t_i + \tau_i]$, i.e.\ if the current simulation window contains one of the random time instances, we consider the following disturbed state equation for implementing the model predictive feedback value:
\begin{equation}\label{heatdepletiondisturbed}
\left\{
\begin{array}{rcll}
y_t-\nu\Delta y - \mu y& = & f_{dist}+\phi^N &\text{ in } (t_i,t_i + \tau_i]\times\Omega,\\
y &= & 0 &\text{ on } (t_i,t_i + \tau_i] \times \partial \Omega,\\
y(t_i) & = & y_i + y_{dist} & \text{ in } \Omega,
\end{array}
\right.
\end{equation}
where $f_{dist}(t,x) \equiv - \chi_\kappa$ in $(t_i,t_i + \tau_i] \times \Omega$ and $y_{dist} (x) = - \psi_\kappa \sin(\pi x)$ in $\Omega$. In this example, we generate the random numbers once and run all tests for these values in order to make the experiments comparable. We consider $K=4$ random time points $\omega_1 = 3.51, \omega_2 = 4.73, \omega_3 = 5.85, \omega_4 = 8.30$ and values $\chi_1 = 75.85, \chi_2 = 380.44, \chi_3 = 567.82, \chi_4 = 753.72$ and $\psi_1 = 6.78, \psi_2 = 7.57, \psi_3 = 7.43, \psi_4 = 3.92$. In Figure~\ref{fig:costmesh} (left) we show the decay of the cost functional for an increasing number of time instances $N$ per prediction horizon for three examples of prediction horizon lengths ($\bar{T}=0.2, 0.3, 0.4$) comparing the adaptive approach of Algorithm~\ref{Alg:NMPC_on} with the standard uniform approach. In this example, we exemplarily run the MPC loop until $t_i=10$ for some $i \in \mathbb{N}$, i.e.\ we cover a time domain of $[0,10]$. We observe in this setting that the adaptive approach delivers smaller cost function values than the equidistant approach. The greatest benefit of the adaptive approach is achieved when a small number of degrees of freedom in a comparatively large prediction horizon is considered, where the adaptive approach distributes the time discretization points tailored to the optimal state dynamics indicated through the error estimate \eqref{est-thm31mu}. Note that fixing $N$ and $\bar{T}$ can lead to different lengths of the simulation window $(t_i,t_i+\tau_i]$ in which the feedback value is applied in Algorithm~\ref{Alg:NMPC_on}, and thus different number of degrees of freedom for the whole considered time domain $[0,10]$. 
\begin{figure}[h]
\centering
   \includegraphics[scale=0.3]{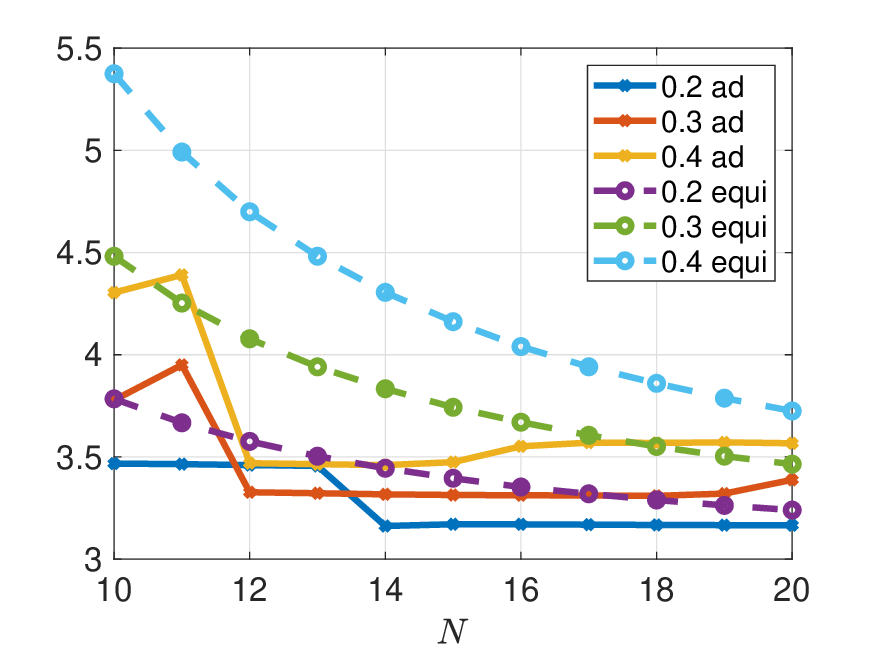}
     \includegraphics[scale=0.3]{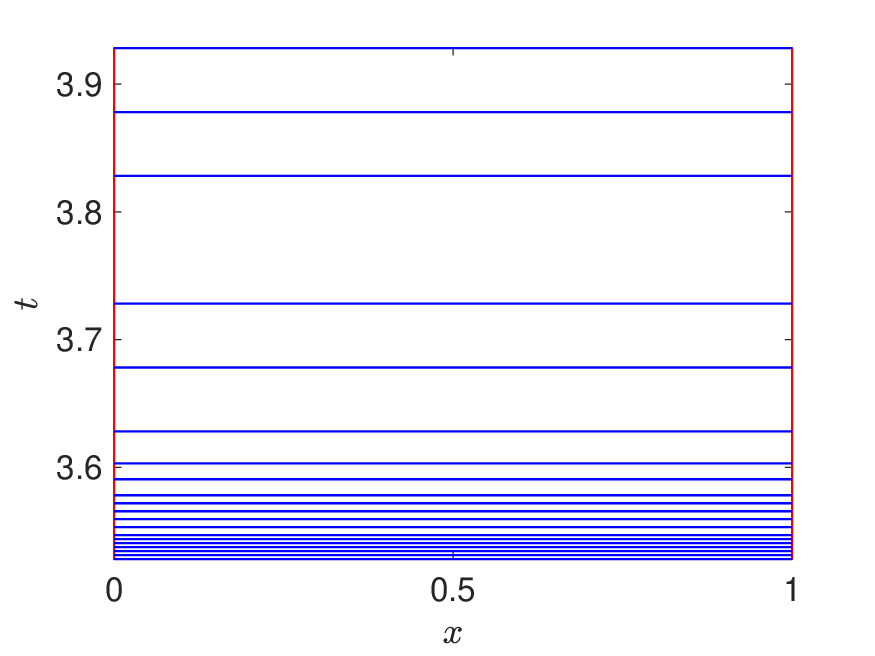}
\caption{Test 1: Decay of cost functional for increasing $N$ for different prediction horizons $\bar{T}$ comparing the adaptive and equidistant approach (left); example of an adaptive prediction horizon for $N=20$, $\bar{T}=0.4$ (right).}\label{fig:costmesh}
\end{figure}
An example of an adaptive prediction horizon is shown in Figure~\ref{fig:costmesh} (right), where we see smaller time steps toward the disturbance at $\omega_1 = 3.51$. Finally, Figure~\ref{fig:control} shows the tracking term and the control costs over time comparing the adaptive with the equidistant method. We observe that the time-adaptive MPC approach leads to a closer tracking of the desired state than an equidistant approach (Figure~\ref{fig:control}, top), but with the price of higher control costs (Figure~\ref{fig:control}, bottom). 
\begin{figure}[h]
\centering
       \includegraphics[scale=0.3]{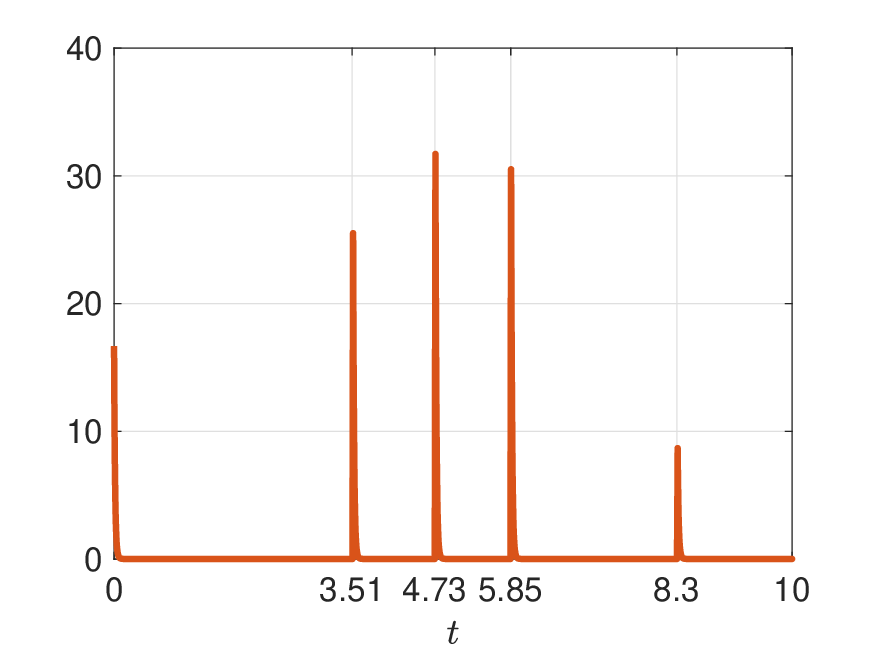} 
     \includegraphics[scale=0.3]{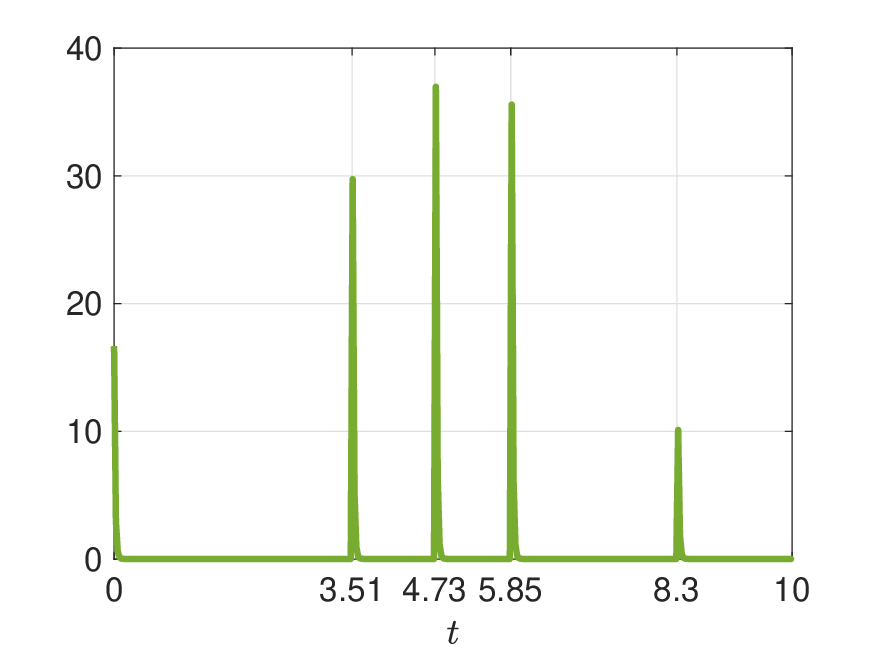} \\
  \hspace{0.1cm} \includegraphics[scale=0.3]{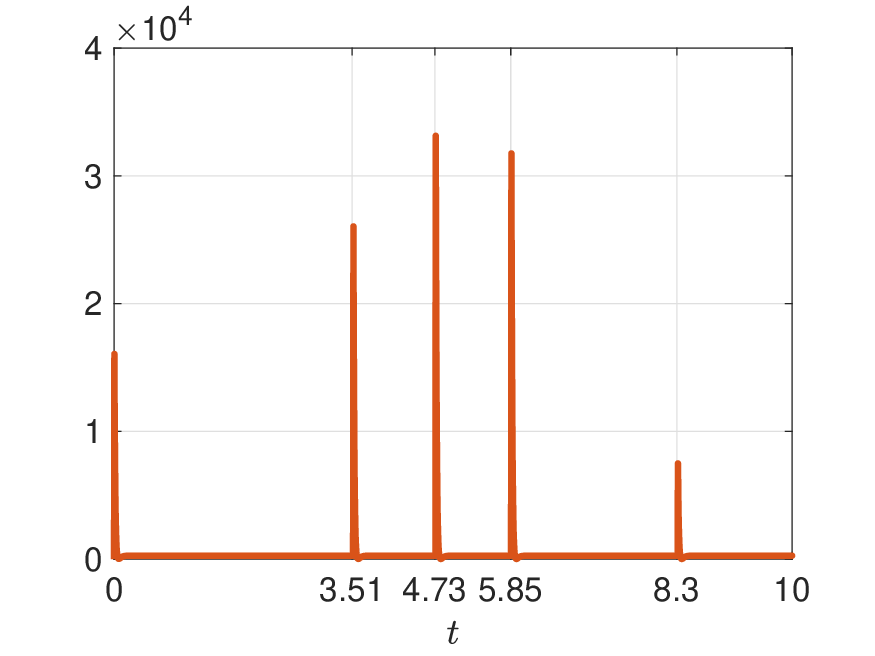} 
     \includegraphics[scale=0.3]{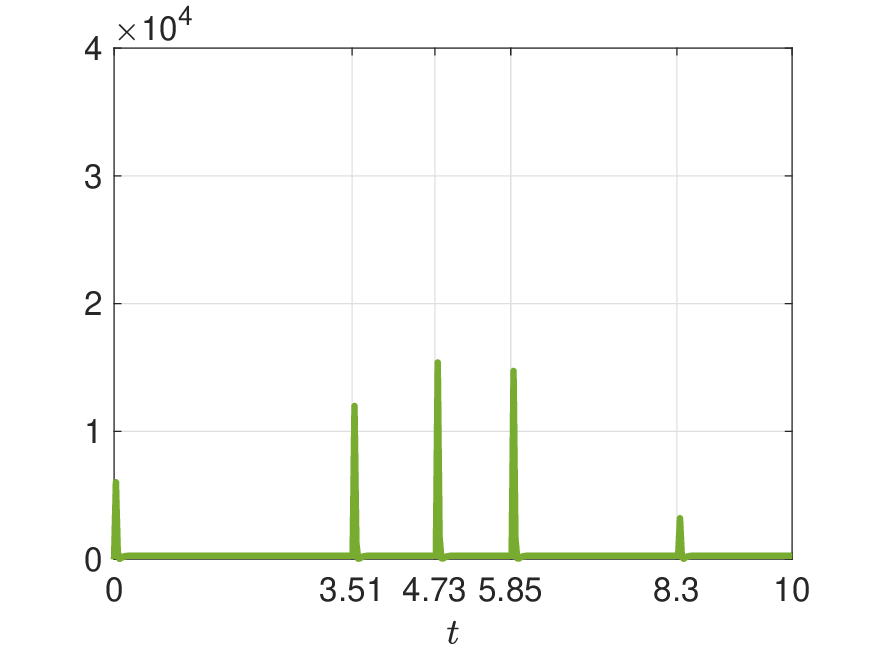} 
\caption{Test 1: $\|y(t)-y_d(t)\|_{L^2(\Omega)}$ (top) and $\|u(t)\|_{L^2(\Omega)}^2$ (bottom) over time with $\bar{T}=0.3$ and $N=12$ following the adaptive (left) and equidistant (right) approach.}\label{fig:control}
\end{figure}

\subsection{Test 2: Solution with a layer at $t=0.5$}
In this numerical test, we consider the optimal control of \eqref{heat} and the cost wants to track a time-dependent reference trajectory. In this example the control horizon will be $[0,1]$, since the quality of our results will not be different if dealing with a larger control horizon. The goal is to well approximate the layer at time $t=0.5$, afterwards the the solution is smooth. The setting for this test example is taken from \cite[Example 5.2]{GonHZ12}, with the following choices: $\nu = 1$ in \eqref{heat} and $\alpha = 1$ in \eqref{costJ}.  The example is built such that the exact optimal solution $(y,u)$ to \eqref{mpcocpreduced} over $[0,1]$ is given by
$$ y(t,x) = \sin (\pi x) \text{atan} ((t-1/2)/\varepsilon), \quad u(t,x) = -\sin (\pi x) \sin (\pi t).$$
The initial condition is $y_\circ (x) = \sin(\pi x) \text{atan}(-1/(2\varepsilon))$. The functions $f$ and $y_d$ are chosen accordingly as 
\begin{align*}
\centering
f(t,x) & = \sin(\pi x) \left( \varepsilon/(t^2 - t + \varepsilon^2 + 1/4) + \pi^2 \text{atan}((t-1/2)/(\varepsilon)) + \sin(\pi t) \right),\\
y_d(t,x)& = \sin(\pi x) \left( \text{atan}((t-1/2)/(\varepsilon)) + \pi \cos(\pi t) - \pi^2 \sin (\pi t) \right).
\end{align*}
For small values of $\varepsilon$ (we use $\varepsilon = 10^{-3}$), the state $y$ develops a very steep gradient at $t = 0.5$, which can be seen in the left panel of Figure~\ref{fig:on_y_MPC}.

We compare the adaptive Algorithm~\ref{Alg:NMPC_on} with a standard equidistant MPC approach. For an exemplary visualization, let us consider the following choices in Algorithm~\ref{Alg:NMPC_on}: $\bar{T}=0.2, N=9$. The numerical state solutions of the controlled problem with the different MPC approaches are shown in the middle and right panel of Figure~\ref{fig:on_y_MPC}. We can see that the standard MPC algorithm with equidistant time grids fails whereas using Algorithm~\ref{Alg:NMPC_on} it is possible to capture the layer at $t=0.5$ and the solution complies much better with the true open-loop state solution over $[0,1]$. 
\begin{figure}[htbp]
\centering
\includegraphics[scale=0.3]{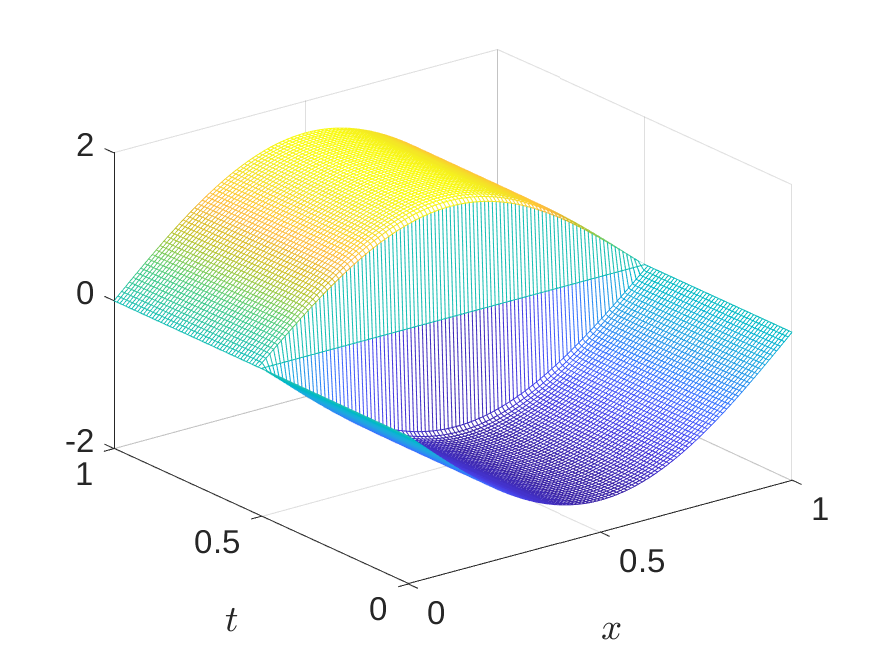}\hspace{-0.4cm}
  \includegraphics[scale=0.3]{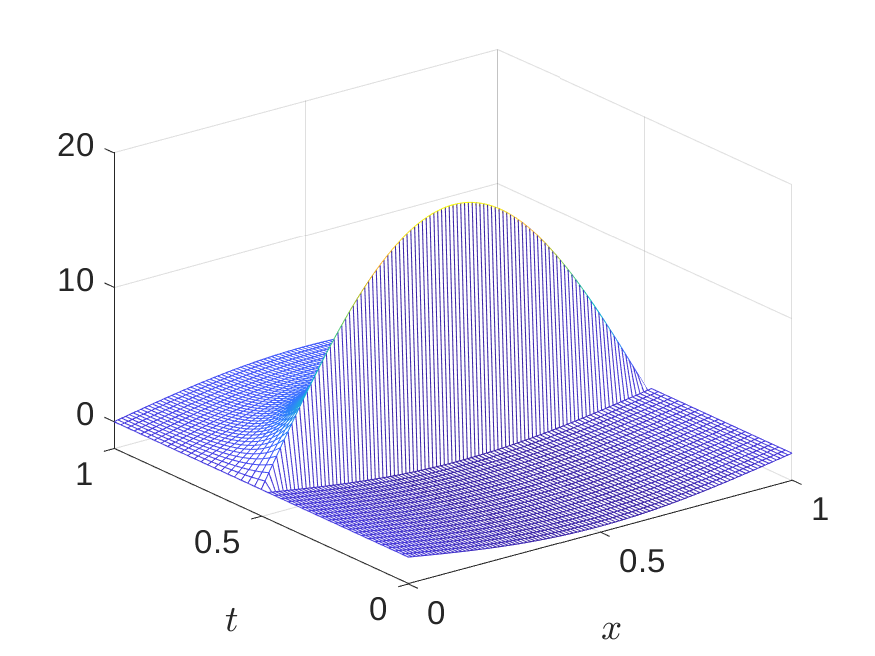} \hspace{-0.4cm}
 \includegraphics[scale=0.3]{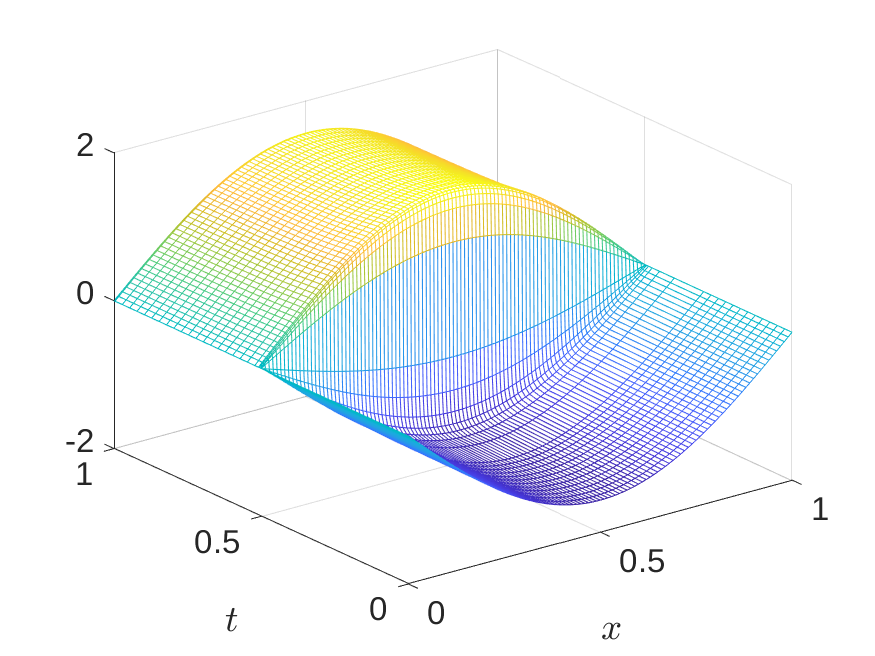} \hspace{-0.4cm}
 \caption{Test 2: True optimal state solution (left), MPC state solution $y$ using a uniform time discretization (middle) and adaptive approach (right).}
 \label{fig:on_y_MPC}
\end{figure}

Let us now provide more details about the temporal grids we obtained with the proposed adaptive scheme. The adaptive grid with a coarse and a fine spatial resolution is shown in the middle and right panel of Figure \ref{fig:onlinegrid}. We observe that the time adaptivity is very insensitive with respect to the spatial resolution, compare Remark~\ref{rem:heuristic}.
\begin{figure}[htbp]
\centering
\includegraphics[scale=0.3]{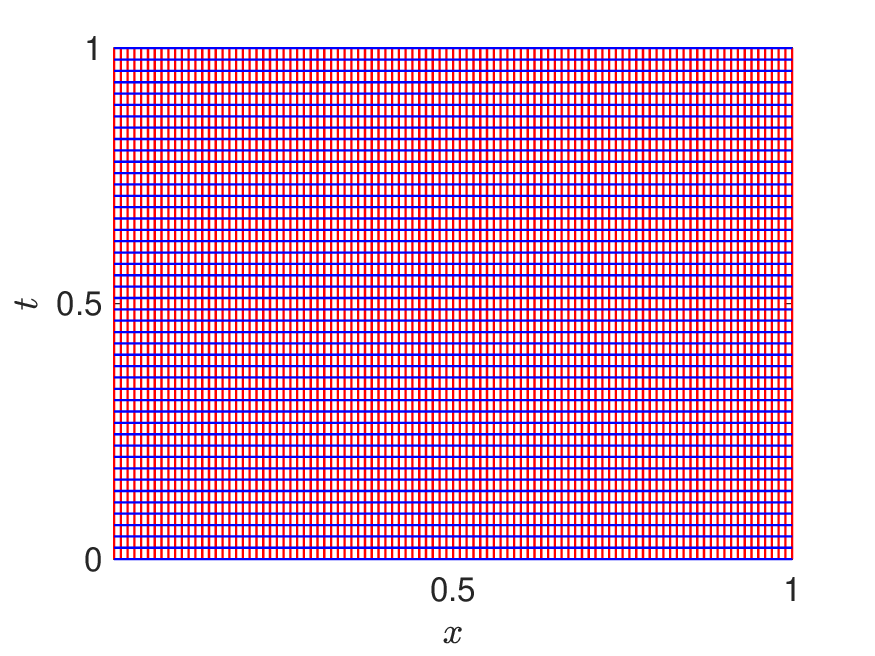}\hspace{-0.4cm}
\includegraphics[scale=0.3]{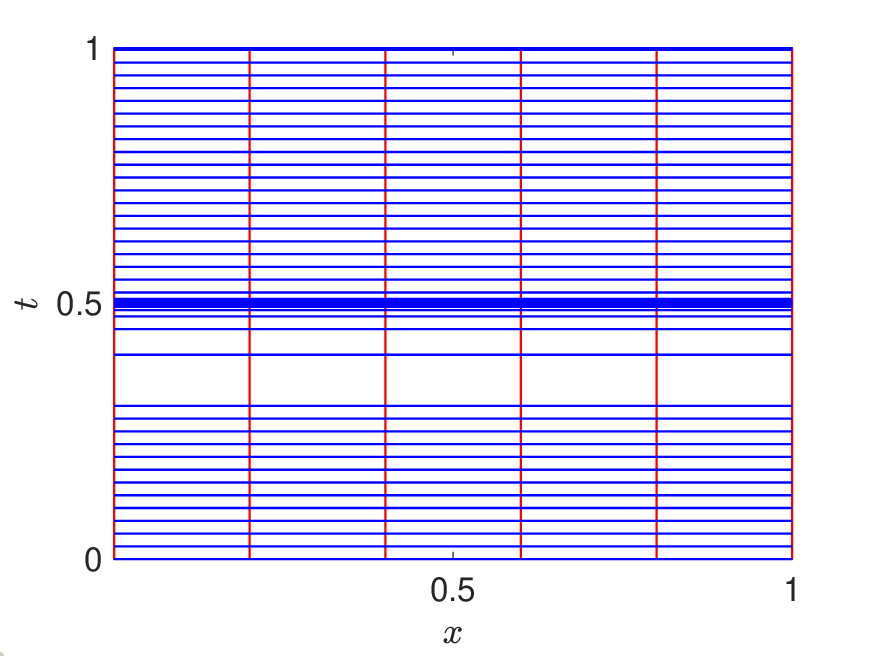}\hspace{-0.4cm}
\includegraphics[scale=0.3]{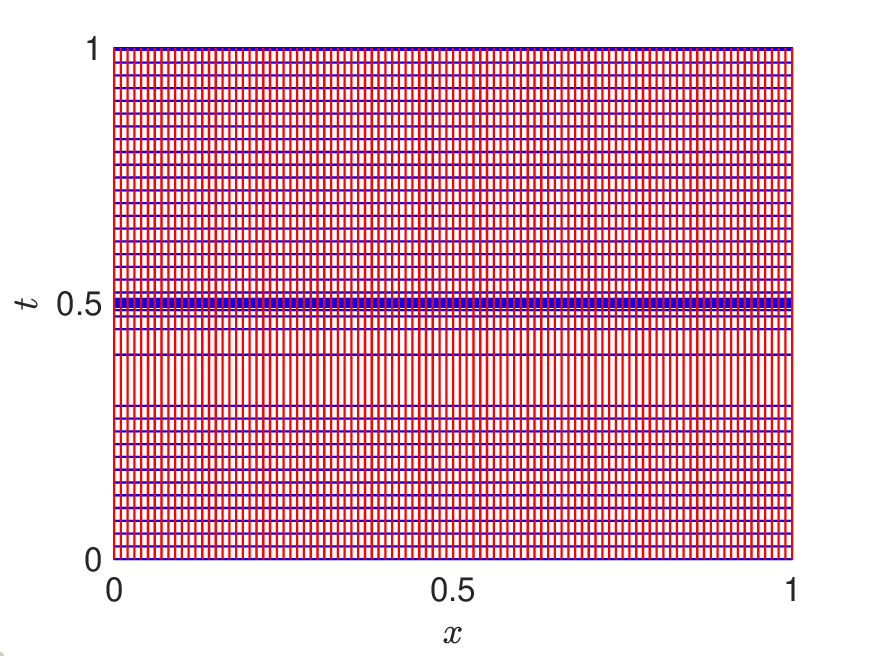}
 \caption{Test 2: Uniform space-time grid with fine spatial resolution (left), adaptive grid with coarse (middle) and fine (right) spatial resolution.}\label{fig:onlinegrid}
\end{figure}
Examples of adaptive time horizons are shown in the top panels of Figure~\ref{fig:onlineadgrids}. As a comparison, the uniform time horizons of the same lengths are shown in the bottom panels of Figure~\ref{fig:onlineadgrids} using the same number of degrees of freedom in each interval. It is clear that the a-posteriori error estimate \eqref{est-thm31} leads to a time grid associated with the open loop optimal state which benefits the accuracy of the control problem.
 
\begin{figure}[htbp]
\centering
\includegraphics[scale=0.3]{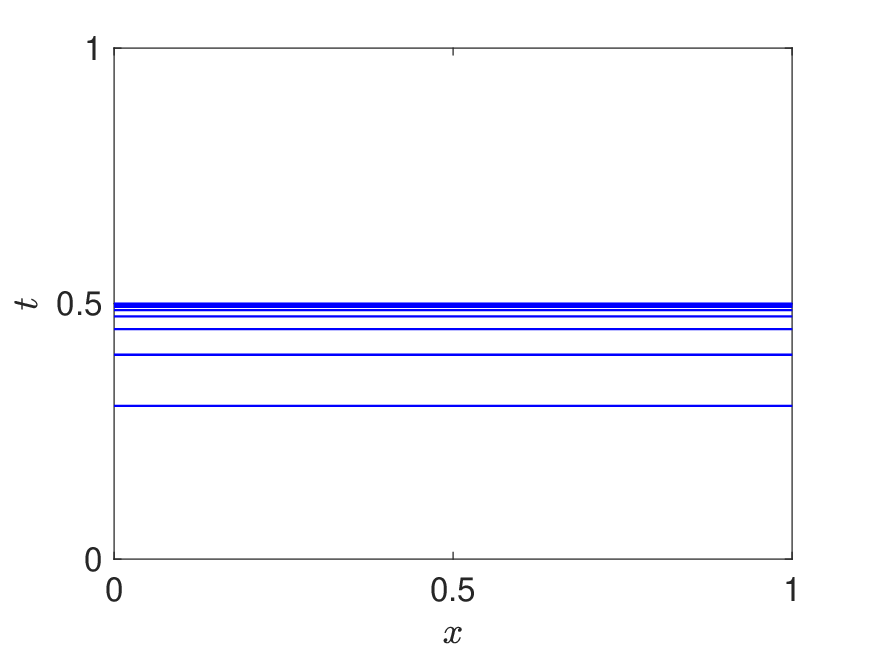}\hspace{-0.4cm}
\includegraphics[scale=0.3]{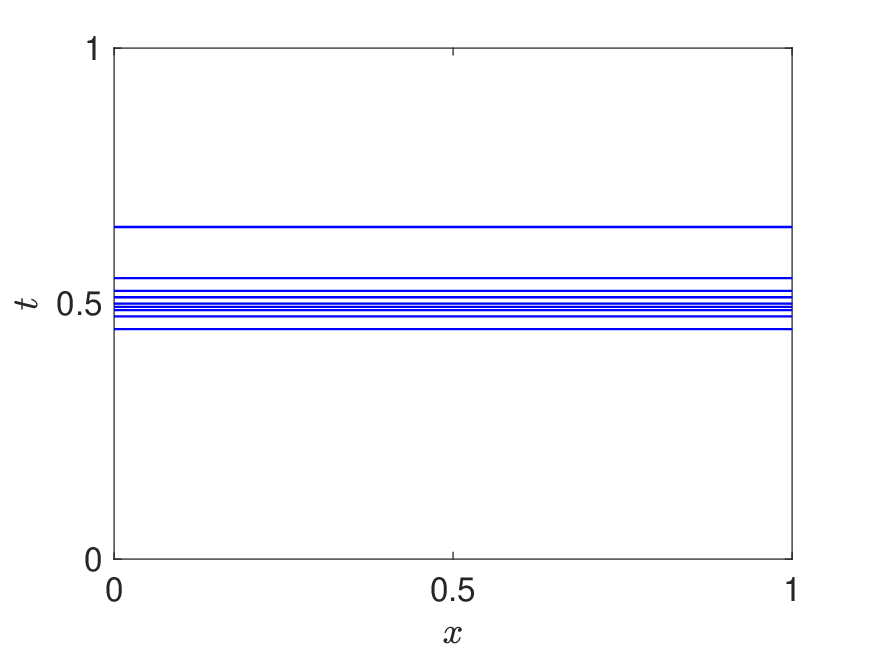}\hspace{-0.4cm}
\includegraphics[scale=0.3]{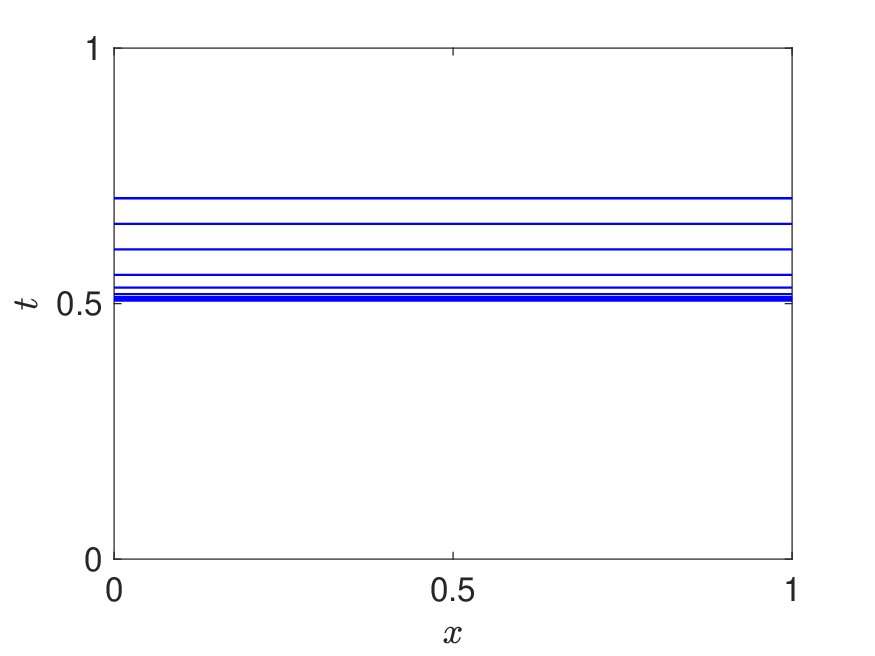}\\
\includegraphics[scale=0.3]{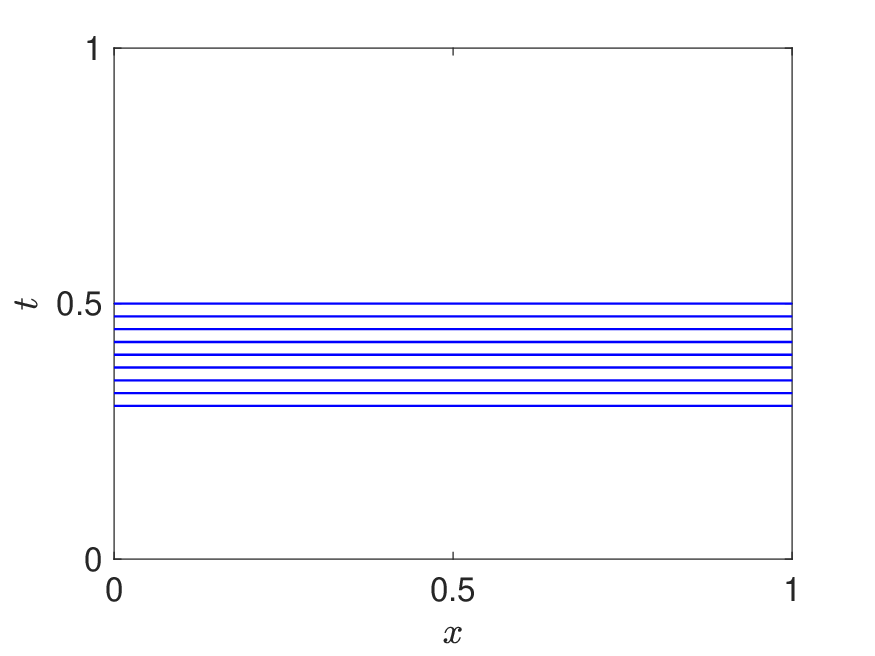}\hspace{-0.4cm}
\includegraphics[scale=0.3]{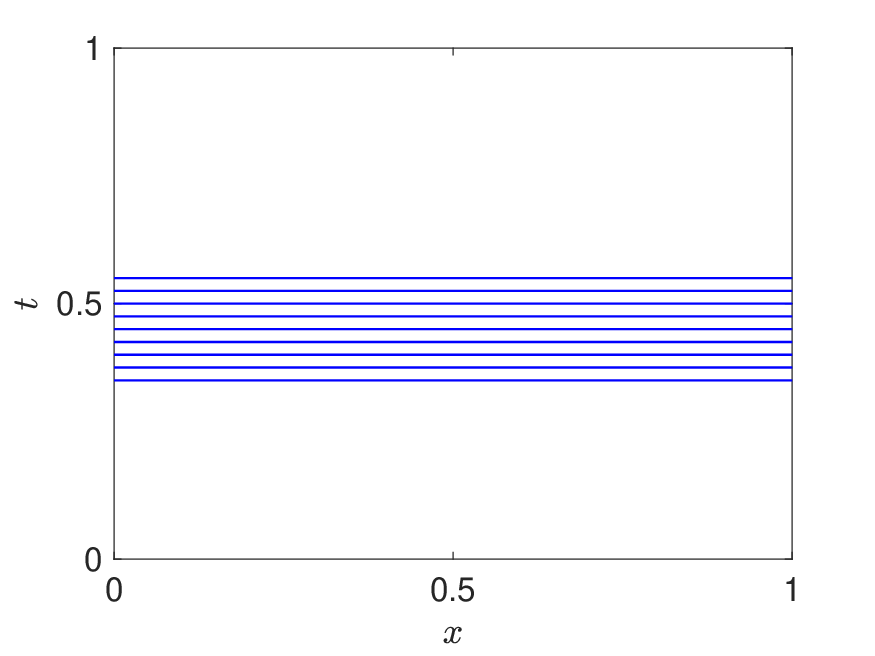}\hspace{-0.4cm}
\includegraphics[scale=0.3]{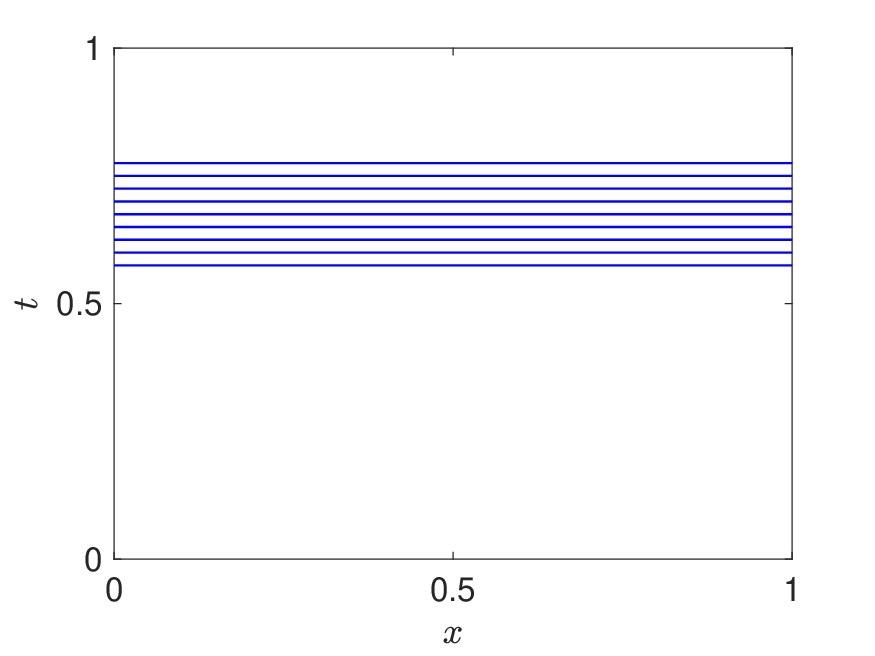}
\caption{Test 2: Adaptive time horizons (top), uniform time horizons according to the standard MPC approach (bottom), MPC iteration levels $i=13$ (left), $i=15$ (middle), $i=24$ (right).}\label{fig:onlineadgrids}
\end{figure}

Finally, we provide an error analysis for the computation of the approximate solutions using an adaptive and an equidistant approach, for different choices of degrees of freedom in time and prediction horizons. For this, we compute the error between the analytical optimal state solution to \eqref{mpcocpreduced} on the finite time domain $[0,T]=[0,1]$ and its numerical approximation using the different MPC approaches measured in the $L^2(0,T;\Omega)-$norm.

In Figure \ref{fig:coston} we compare the $L^2$-error  between the optimal solution to \eqref{mpcocpreduced} and the solutions according to Algorithm~\ref{Alg:NMPC_on}. We fixed the prediction horizon $\bar{T}$ and modified the choice of the instances in each sub interval using the equidistant and adaptive method. As one can see, with this approach we need a small prediction horizon and a large number of time instances to obtain an error of order $10^{-1}$ with an equidistant grid whereas the adaptive method provides a more flexible approach for a small $\bar{T}=\{0.1,0.2,0.3,0.4\}$. Depending on whether the layer at $t=0.5$ is a time discretization point or not, the approximation quality can differ strongly leading to the illustrated zig-zag behavior in the equidistant scheme. Since the exact location of the layer is usually not known a-priorily, an equidistant time grid approach is easy to fail.

  \begin{figure}[htbp]
  \centering
     \includegraphics[scale=0.3]{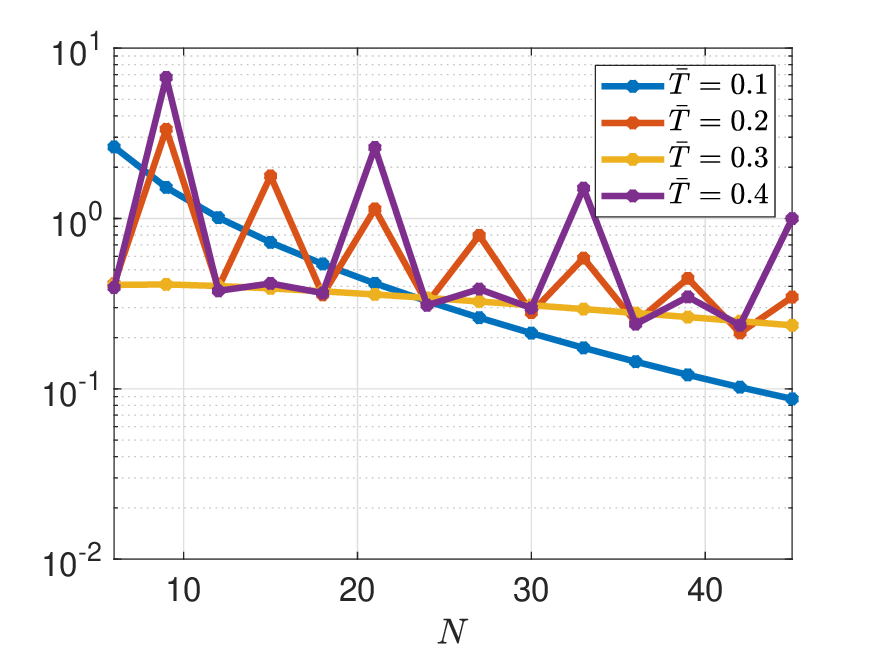}
  \includegraphics[scale=0.3]{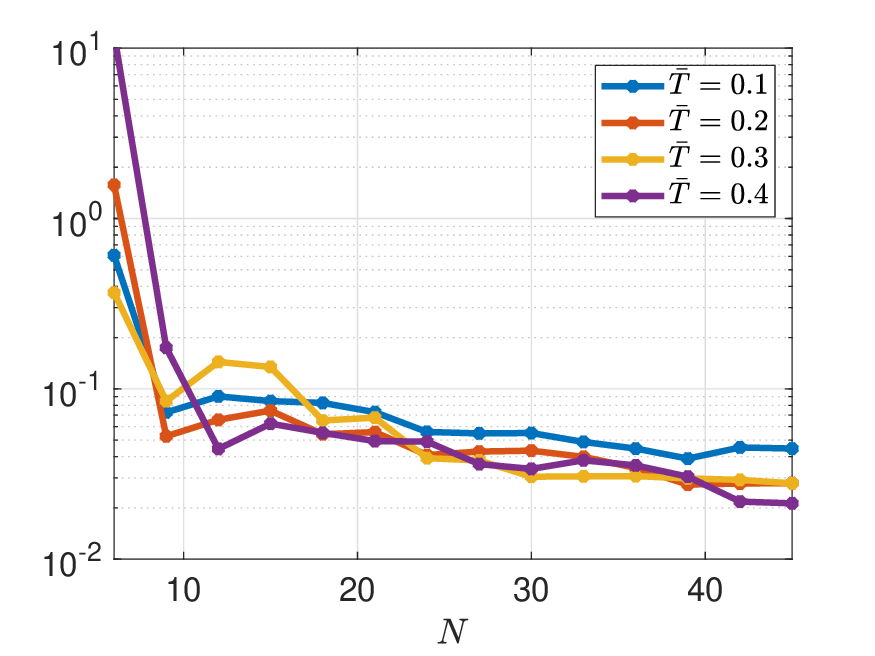}
     \caption{Test 2: $L^2-$error for the MPC approach with equidistant (left) and adaptive (right) time grids.}
  \label{fig:coston}
\end{figure}

In Figure \ref{fig:timeon} we compare the computational time in seconds of the standard MPC Algorithm with Algorithm~\ref{Alg:NMPC_on} including the computational time needed to create the adaptive time discretization within each MPC iteration. Clearly, to obtain a more accurate solution it is computationally more expensive but we also want to remark that the minimum error with the equidistant grid is $0.0872$ computed in $25.87$s whereas, with the adaptive approach, to get an error of $0.0216$ we needed $16.06s$. This shows that our method is more accurate and also more efficient computationally without any a-priori knowledge of the control problem.

  \begin{figure}[htbp]
\centering
     \includegraphics[scale=0.3]{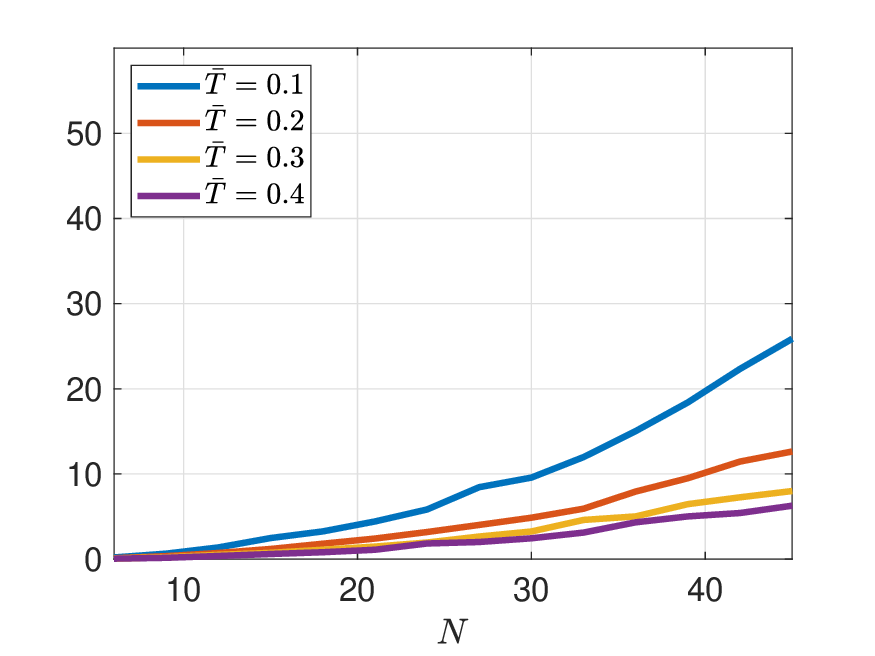}
  \includegraphics[scale=0.3]{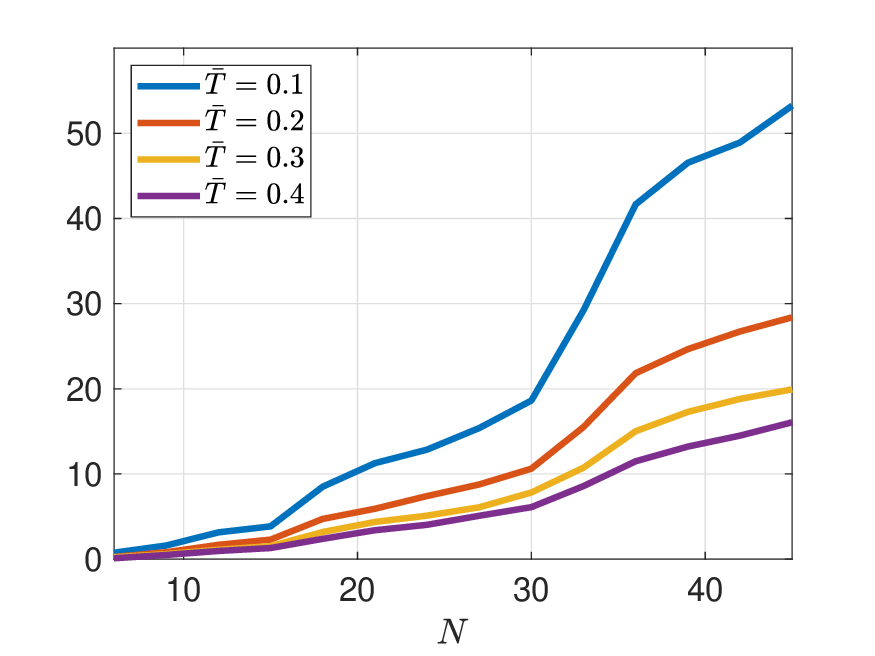}
     \caption{Test 2: {\color{black}Computational} time in seconds for the MPC approach with equidistant (left) and adaptive (right) time grids. }
  \label{fig:timeon}
\end{figure}

\section{Conclusions and Outlook}\label{secconcl}
In this work we have proposed an approach to include time adaptive discretization in the MPC framework. Our approach is fully flexible and relies on a reformulation of the optimal control problem into a second order in time and fourth order in space equation. Our approach does not require further assumptions on the control problem. The use of a-posteriori error estimates to generate the time grid in the MPC method is the important novelty of our work. Numerical tests have shown the efficiency of the method for both accuracy and computational time.
We also want to remark that our approach is particularly suitable when a layer is shown in the solution or the disturbances happen. Other experiments with mild temporal variations did not always show a clear difference between equidistant and adaptive grid. The a-posteriori error indicator delivers an appropriate adaptive time grid even providing a coarse spatial resolution.

In the future, we plan to derive an a-posteriori error estimator for a fully space-time discrete form and to use that indicator for a fully adaptive and automatic MPC scheme, where the idea is to avoid an a-priori choice of the prediction horizon and/or the number of degrees of freedom in each sub-iteration. Another goal is to extend these results to nonlinear control problems and as soon as we increase the dimension of the problem make use of efficient model reduction techniques, such as POD, to decrease the computational time.

\bibliographystyle{abbrv}
\bibliography{references,csc}

\end{document}